\newtheorem{dummy}{anything}[section]
\newtheorem{theorem}[dummy]{Theorem}
\newtheorem{lemma}[dummy]{Lemma}
\theoremstyle{definition}%%Change Theoremstyle
\newtheorem{definition}[dummy]{Definition}
\newtheorem{example}[dummy]{Example}
\newtheorem{remark}[dummy]{Remark}
\newtheorem{question}[dummy]{Question}
\def\:{\mkern 1.2mu \colon}
\newcommand{\mmatrix}[4]{\left (\vcenter
	{\xymatrix@C-2pc@R-2pc{#1&#2\\#3&#4} } \right )}
\DeclareMathOperator{\Mod}{Mod}
\DeclareMathOperator{\id}{id}   
\numberwithin{equation}{section}
\begin{document}
	\title[Minimal Generation of Mapping Class Groups]
	{Minimal Generation of Mapping Class Groups: A Survey of the Orientable Case }
    
    \subjclass[2020]{Primary:57K20;
Secondary: 20F38, 20F05 }
	\keywords{mapping class group, minimal generating set, torsion elements, involution}
	
	\author{ T{\"{u}}l{\.I}n Altun{\"{o}}z, Mehmetc{\.I}k Pamuk, Oguz Yildiz}
		
	\address{Faculty of Engineering, Ba\c{s}kent University, Ankara, Turkey} 
\email{tulinaltunoz@baskent.edu.tr} 
\address{Department of Mathematics, Middle East Technical University,
 Ankara, Turkey}
 \email{mpamuk@metu.edu.tr}
 \address{Department of Mathematics, Middle East Technical University,
 Ankara, Turkey}
  \email{oguzyildiz16@gmail.com}
		
	\date{July 2, 2025}	
	%%%%%%%%%%%%%%%%%%%%%%%%%%%%%%%%%%%%%%%%%%%%%%%%%%%%%%%%%%%%%%%%%%

\begin{abstract}
The mapping class group of an orientable surface, which records its symmetries up to isotopy, plays a central role in low-dimensional topology.
This chapter explores the foundational problem of determining minimal generating sets for these groups.
We chart the development of this area from classical results involving Dehn twist generators to more recent breakthroughs showing that mapping class groups can be generated by just two elements, pairs of torsion elements, or a small collection of involutions.
This chapter contains a discussion of the most current results for punctured surfaces, including a new improvement showing that for an even number of punctures $p\geq 8$ the group $\Mod(\Sigma_{13,p})$ is generated by three involutions.
Throughout, we highlight the rich interplay between the algebraic features of these generating sets and the underlying geometric structures they encode.
The chapter aims to provide a comprehensive account of the pursuit of algebraic and geometric efficiency within one of topology’s most intricate and influential groups.
\end{abstract}

	%%%%%%%%%%%%%%%%%%%%%%%%%%%%%%%%%%%%%%%%%%%%%%%%%%%%%%%%%%%%%%%%%%%%%%%%%%%%%%%
	
	\maketitle	

\section{Introduction}
The mapping class group\index{mapping class group} of an orientable surface is a fundamental object in low-dimensional topology, geometric group theory and algebraic geometry, and is closely related with hyperbolic geometry, $3$- and $4$-manifolds, and symplectic geometry.
It encodes the symmetries of the surface up to isotopy and plays a central role in the study of moduli spaces, Teichmüller theory, and $3$-manifolds.
Understanding how the mapping class group can be generated by various types of elements, such as Dehn twists, involutions, or finite-order elements, offers deep insight into both its algebraic and geometric structure.
Different generating sets reflect different aspects of the group's interaction with geometry, dynamics, and arithmetic.
For instance, Dehn twists\index{Dehn twist} correspond naturally to geometric operations on curves;
torsion elements\index{torsion element} often arise from symmetries or automorphisms of the surface;
and involutions\index{involution} relate closely to Coxeter-type presentations and reflection symmetries.
Finding generating sets\index{generating set} that satisfy particular properties (e.g., minimal size, composed entirely of involutions, elements with minimal orders, elements with the same order or consisting of bounded-order elements) is important for several reasons:

\begin{itemize}
  \item[(i)] \text{Theoretical significance}: Different generating sets can reveal hidden symmetries, simplify presentations, or connect mapping class groups to other well-studied algebraic objects such as braid groups, Coxeter groups, and arithmetic groups.
  \item[(ii)] \text{Computational applications}: Generators of small finite order can simplify algorithms in computational topology and geometry, particularly for computations involving mapping classes, moduli spaces, or $3$-manifold invariants.
  \item[(iii)] \text{Geometric insight}: Some generating sets correspond naturally to geometric decompositions of surfaces (e.g., pants decompositions), leading to a deeper understanding of surface geometry and the structure of moduli spaces.
  \item[(iv)] \text{Connections to dynamics and physics}: In contexts such as string theory, topological quantum field theory and conformal field theory, mapping class groups arise as symmetry groups of moduli spaces.
Specific generating sets often correspond to symmetries of physical systems or algebraic structures in these theories.
\end{itemize}

This chapter provides an overview of the generation of  mapping class groups, focusing in particular on finite generating sets and the structural properties of their elements.
We emphasize both the theoretical richness and the practical importance of constructing generating sets that satisfy constraints such as minimality or bounded order.
\section{Orientable Surfaces}

\subsection{Background and Classical Generators}
Let $\Sigma_{g, p}^b$ denote a connected orientable surface\index{surface!orientable} of genus $g$ with $p \geq 0$ punctures and $b \geq 0$ boundary components.
When $p=0$ or $b=0$, we omit the corresponding index from the notation.
In this section, we briefly define the mapping class group of a surface and introduce its foundational concepts.
Our purpose is to create minimal generating sets for their mapping class groups.
Elements of the mapping class group can be studied via their actions on homotopy classes of simple closed curves.
We begin by defining basic types of curves and then outline the classical classification of surfaces (see \cite{farb-margalit} for details).
\begin{definition}
A \emph{simple closed curve}\index{simple closed curve} on $\Sigma_g$ is the image of an injective continuous map $S^1 \to \Sigma_g$ from the unit circle.
\end{definition}

\begin{definition}
A simple closed curve $c$ is \emph{nonseparating}\index{simple closed curve!nonseparating} if $\Sigma_g \setminus c$ is connected;
otherwise it is \emph{separating}\index{simple closed curve!separating}.
\end{definition}

\begin{definition}
For homotopy classes $[c_1], [c_2]$ of simple closed curves, their \emph{geometric intersection number}\index{geometric intersection number} $i(c_1, c_2)$ is the minimal number of transverse intersection points among representative curves.
\end{definition}

\begin{theorem}[Classification of Surfaces]
There exists a bijection:
\[
\{g \in \mathbb{Z} : g \geq 0\} \longleftrightarrow \{\text{homeomorphism classes of closed, connected, orientable surfaces}\}
\]
\end{theorem}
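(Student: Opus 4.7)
The plan is to establish the bijection by exhibiting a concrete candidate map, proving surjectivity via a normal-form argument for triangulated surfaces, and proving injectivity by means of a homeomorphism invariant. Concretely, I would send $g \in \mathbb{Z}_{\geq 0}$ to the homeomorphism class of the \emph{standard genus-$g$ surface} $\Sigma_g$, defined as the boundary of a regular neighborhood of a wedge of $g$ circles in $\mathbb{R}^3$, or equivalently as the connected sum of $g$ tori (with the convention $\Sigma_0 := S^2$). This assignment is well-defined up to homeomorphism.

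For surjectivity, the first step is to invoke Radó's theorem, which asserts that every closed topological surface admits a triangulation. Given a triangulation of a closed, connected, orientable surface $\Sigma$, I would cut along a maximal tree in the $1$-skeleton and then along the remaining edges to unfold $\Sigma$ into a polygonal disk $P$ whose boundary edges are identified in pairs. The identification pattern is encoded by a word in letters and their inverses. Using orientability to rule out identifications of the form $a a$ (which would produce a M\"obius band and hence a crosscap), I would apply the classical polygon reduction algorithm: cancel adjacent pairs $x x^{-1}$, group paired letters into commutator blocks, and iterate until the boundary word is brought into the normal form
\[
a_1 b_1 a_1^{-1} b_1^{-1} a_2 b_2 a_2^{-1} b_2^{-1} \cdots a_g b_g a_g^{-1} b_g^{-1}
\]
for some $g \geq 0$. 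The resulting identification space is homeomorphic to $\Sigma_g$, establishing surjectivity.

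For injectivity, I would use the Euler characteristic $\chi$, which is a homeomorphism invariant (for instance via its definition through singular homology, $\chi(\Sigma) = \sum_i (-1)^i \operatorname{rank} H_i(\Sigma;\mathbb{Z})$). A direct computation from the normal form, or from a standard CW structure with one $0$-cell, $2g$ one-cells, and one $2$-cell, gives $\chi(\Sigma_g) = 2 - 2g$. Since the function $g \mapsto 2 - 2g$ is injective on $\mathbb{Z}_{\geq 0}$, surfaces with distinct genera cannot be homeomorphic.

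The main obstacle is Radó's triangulation theorem itself, which is highly nontrivial in the purely topological category; once triangulability is granted, the polygon reduction is a finite combinatorial procedure and the Euler characteristic computation is routine. In a survey of this nature, I would cite \cite{farb-margalit} for the full details of both the triangulation result and the reduction algorithm rather than reproduce them here.
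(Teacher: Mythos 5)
Your proposal cannot be compared against a proof in the paper, because the paper does not prove this statement: the classification theorem is stated there purely as classical background, with the details deferred to \cite{farb-margalit}. Measured on its own terms, your argument is the standard and correct textbook proof --- Rad\'o's triangulation theorem, reduction of the resulting edge-identification word on a polygon to the normal form $a_1 b_1 a_1^{-1} b_1^{-1} \cdots a_g b_g a_g^{-1} b_g^{-1}$ (with orientability excluding the crosscap blocks $aa$), and injectivity via the homeomorphism invariance of $\chi$ together with $\chi(\Sigma_g) = 2 - 2g$, which is a complete invariant within the orientable closed class. One technical slip deserves correction: cutting along a maximal tree in the $1$-skeleton \emph{and then along all the remaining edges} would sever the surface into a disjoint union of triangles, not a single polygonal disk. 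The correct construction takes a spanning tree of the \emph{dual} graph of the triangulation and glues the triangles together along the edges crossed by that tree (equivalently, cuts only along the edges not crossed by it); connectivity of the dual graph then yields one disk whose boundary edges are identified in pairs, after which your reduction algorithm applies as stated. You should also make explicit the preliminary step, implicit in ``the classical polygon reduction algorithm,'' of first arranging that all polygon vertices lie in a single identification class, since the commutator-grouping moves require it. With those two repairs, your sketch is a faithful outline of the argument in the literature you cite.
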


\begin{theorem}[Curve Mapping Criterion]
There exists an orientation-preserving homeomorphism mapping a curve $c_1$ to $c_2$ if and only if $\Sigma_g \setminus c_1 \cong \Sigma_g \setminus c_2$.
In particular, any two nonseparating curves are related by a homeomorphism.
\end{theorem}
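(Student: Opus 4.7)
The forward direction is immediate: any self-homeomorphism $\Phi$ of $\Sigma_g$ with $\Phi(c_1)=c_2$ restricts to a homeomorphism of the complements. My plan for the converse is to promote a homeomorphism $\psi\colon \Sigma_g\setminus c_1 \to \Sigma_g\setminus c_2$ of open complements to a homeomorphism of the compact cut surfaces, then re-glue to obtain a self-homeomorphism of $\Sigma_g$ carrying $c_1$ to $c_2$, and finally correct orientation if needed. To set this up, let $\widehat{\Sigma}_i$ denote the (possibly disconnected) compact surface obtained by cutting $\Sigma_g$ along $c_i$; it carries two boundary circles together with a gluing involution $\iota_i$ whose quotient returns $(\Sigma_g,c_i)$, and its interior is canonically identified with $\Sigma_g\setminus c_i$. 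The first step is to extend $\psi$ across this identification: each end of $\Sigma_g\setminus c_i$ admits a neighbourhood basis of open annuli coming from a collar of a boundary circle of $\widehat{\Sigma}_i$, so $\psi$ extends continuously to a homeomorphism $\widehat{\psi}\colon \widehat{\Sigma}_1\to\widehat{\Sigma}_2$ of the cut surfaces.

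Next I would modify $\widehat{\psi}$ by an isotopy supported in a collar of $\partial\widehat{\Sigma}_1$ so that it intertwines the gluing involutions, i.e.\ $\iota_2\circ\widehat{\psi}|_\partial = \widehat{\psi}|_\partial\circ\iota_1$. This is always achievable because any two orientation-preserving self-homeomorphisms of $S^1$ are isotopic, and in the nonseparating case one additionally has the freedom to swap the two boundary circles of $\widehat{\Sigma}_1$ if needed to match the gluings. The equivariant $\widehat{\psi}$ then descends to a self-homeomorphism $\Phi$ of $\Sigma_g$ with $\Phi(c_1)=c_2$. If $\Phi$ happens to reverse orientation, I would post-compose with any orientation-reversing self-homeomorphism of $\Sigma_g$ that preserves $c_2$ setwise, for instance a reflection across a properly embedded arc meeting $c_2$ transversely in a single point.

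For the final assertion, I would argue directly from the classification. Cutting $\Sigma_g$ along a nonseparating simple closed curve always yields a connected compact orientable surface with two boundary circles and Euler characteristic $2-2g$, hence of genus $g-1$; such a surface is unique up to homeomorphism by the boundary version of the classification stated above. Thus the complements of any two nonseparating simple closed curves are homeomorphic, and the general criterion just proved supplies the desired orientation-preserving self-homeomorphism of $\Sigma_g$.

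The main technical obstacle I anticipate is the end-extension step: one has to check that $\psi$ sends a collar-neighbourhood basis of annuli at each end of $\Sigma_g\setminus c_1$ to such a basis at the corresponding end of $\Sigma_g\setminus c_2$, rather than wrapping around or accumulating on the end in a pathological way. This is where the tameness of the ends of a surface of finite type — ultimately the classification of non-compact surfaces — is essential, and is the only step that is not a formal gluing manipulation; everything after it is collar-level bookkeeping.
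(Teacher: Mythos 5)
The paper states this criterion without proof (it is the change of coordinates principle, deferred to \cite{farb-margalit}), and your cut--and--reglue architecture is exactly the standard argument; but one step as written would fail, namely the claim that $\psi$ ``extends continuously'' to a homeomorphism $\widehat{\psi}\colon\widehat{\Sigma}_1\to\widehat{\Sigma}_2$ of the compact cut surfaces. A homeomorphism of interiors need not extend to the boundary: on the open annulus $(0,1)\times S^1$ the map $(r,e^{i\theta})\mapsto\bigl(r,e^{i(\theta+2\pi/r)}\bigr)$ is a self-homeomorphism admitting no continuous extension to the circle $r=0$, and precomposing your $\psi$ with such a spiral in a collar of an end leaves the hypothesis of the theorem intact while destroying any hope of extension. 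Tameness of the ends gives you strictly less than you assert: it gives that $\psi$ induces a bijection between the ends of $\Sigma_g\setminus c_1$ and those of $\Sigma_g\setminus c_2$, i.e.\ between the boundary circles of the cut surfaces, not that $\psi$ itself extends. So the step you flagged as the ``main technical obstacle'' is not merely delicate; as stated it is false.

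Fortunately the fix is cheap, because nothing downstream uses that $\widehat{\psi}$ restricts to $\psi$: the theorem asserts only existence, so discard $\psi$ after extracting its end data. The cut surfaces $\widehat{\Sigma}_1,\widehat{\Sigma}_2$ are compact orientable surfaces with homeomorphic interiors, hence with equal Euler characteristic, matching component structure, and matching numbers of boundary circles (one per end); by the classification of compact surfaces with boundary they are homeomorphic, and you may choose any homeomorphism carrying boundary circles to boundary circles compatibly with the pairing prescribed by the gluing involutions. Your collar adjustment making the map $\iota$-equivariant, the descent to $\Sigma_g$, and the genus computation $\chi=2-2g$, $b=2$, $h=g-1$ for the nonseparating case then all go through verbatim. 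One further small repair: ``reflection across a properly embedded arc'' is not a well-defined operation on $\Sigma_g$; the clean statement is that for every simple closed curve $c_2$ there is an orientation-reversing self-homeomorphism of $\Sigma_g$ preserving $c_2$ setwise, obtained for instance from a mirror-symmetric embedding of $\Sigma_g$ in $\mathbb{R}^3$ with $c_2$ contained in the reflection plane. With these two corrections your argument is complete and coincides with the standard proof the paper points to.
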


With the basic concepts of surfaces and curves established, we can now formally define the central object of this chapter.
\begin{definition} \label{Mapping Class Group}
Let $\Sigma = \Sigma_{g,p}^{b}$ denote a connected, orientable surface of genus $g$ with $p$ punctures and $b$ boundary components.
The \textbf{mapping class group}\index{mapping class group} of $\Sigma$, denoted by $\Mod(\Sigma)$, is the group of isotopy classes of orientation-preserving homeomorphisms of $\Sigma$ that fix the boundary components pointwise and preserve the set of punctures.
Formally, it is defined as the quotient group:
$$
\Mod(\Sigma_{g,p}^{b}) = \pi_0(\text{Homeo}^+(\Sigma_{g,p}^{b})) := \text{Homeo}^+(\Sigma_{g,p}^{b}) / \text{Homeo}_0(\Sigma_{g,p}^{b})
$$
where $\text{Homeo}^+(\Sigma_{g,p}^{b})$ is the topological group of orientation-preserving homeomorphisms of $\Sigma$ with the given restrictions, and $\text{Homeo}_0(\Sigma_{g,p}^{b})$ is the normal subgroup of homeomorphisms that are isotopic to the identity.
\end{definition}

\subsection{Dehn Twists: Fundamental Generators}
The primary generators of mapping class groups are \emph{Dehn twists}\index{Dehn twist} about simple closed curves.
\begin{definition}
Consider the annulus $I \times S^1$ with coordinates $(x, e^{i\theta})$.
The \emph{twist map} is defined as
\[
t : I \times S^1 \to I \times S^1, \quad (x, e^{i\theta}) \mapsto (x, e^{i(\theta + 2\pi x)}).
\]
\end{definition}

\begin{definition}
Let $a \subset \Sigma_g$ be a simple closed curve with an annular neighborhood $N \cong I \times S^1$.
The \emph{right Dehn twist}\index{Dehn twist!right} about $a$ is the homeomorphism $t_a \colon \Sigma_g \to \Sigma_g$ given by
\[
t_a(x) = 
\begin{cases} 
h^{-1} \circ t \circ h(x) & x \in N \\
x & x \in \Sigma_g \setminus N,
\end{cases}
\]
where $h: N \to I \times S^1$ is an orientation-preserving homeomorphism.
\end{definition}

\subsection{Algebraic Relations}
Dehn twists satisfy fundamental relations\index{Dehn twist!relations} reflecting surface topology.
\begin{itemize}
    \item \textbf{Isotopy}.
$t_a = t_b$ if and only if $a$ is isotopic to $b$.
    \item \textbf{Conjugation}.
$f t_a f^{-1} = t_{f(a)}$ for homeomorphisms $f$.
    \item \textbf{Commutativity}. If $i(a,b) = 0$, then $t_a t_b = t_b t_a$.
    \item \textbf{Braid Relation}\index{braid relation}. If $i(a,b) = 1$, then $t_a t_b t_a = t_b t_a t_b$.
\end{itemize}
    
Other key relations include the \textbf{Lantern Relation}\index{lantern relation}, which applies to curves on a sphere with four holes ($\Sigma_0^4$) as shown in Figure~\ref{lantern}, and is given by $t_a t_b t_c t_d = t_x t_y t_z$.
\begin{figure}[h]
\begin{center}
\scalebox{0.19}{\includegraphics{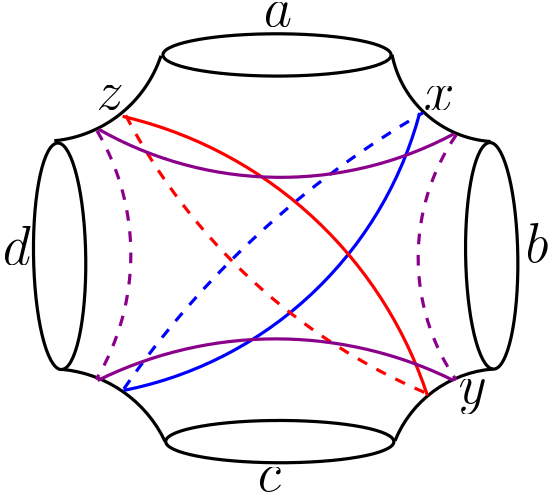}}
\caption{The curves in the lantern relation $t_a t_b t_c t_d = t_x t_y t_z$.}
\label{lantern}
\end{center}
\end{figure}

For surfaces of genus one, we have the \textbf{Two-holed Torus Relation}\index{two-holed torus relation} for $\Sigma_1^2$, illustrated in Figure~\ref{chain_torus}, which is $(t_a t_b t_c)^4 = t_d t_e$;
and the \textbf{One-holed Torus Relation}\index{one-holed torus relation} for $\Sigma_1^1$, depicted in Figure~\ref{chain_torus_one}, given by $(t_a t_b)^6 = t_d$.
\begin{figure}[h]
\begin{center}
\scalebox{0.2}{\includegraphics{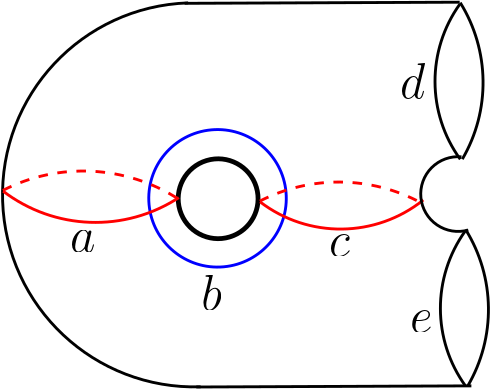}}
\caption{The curves in the two-holed torus relation  $(t_a t_b t_c)^4 = t_d t_e$.}
\label{chain_torus}
\end{center}
\end{figure}

\begin{figure}[h]
\begin{center}
\scalebox{0.3}{\includegraphics{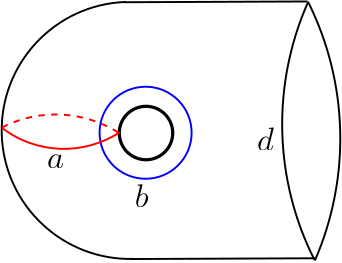}}
\caption{The curves in the one-holed torus relation $(t_a t_b)^6 = t_d$.}
\label{chain_torus_one}
\end{center}
\end{figure}

The mapping class group is a central object in low-dimensional topology, and its algebraic structure is of deep interest.
The problem of finding generators for $\Mod(\Sigma_g)$ was first addressed by Dehn~\cite{dehn}.
He showed that $\Mod(\Sigma_g)$ is generated by finitely many Dehn twists, specifically giving a generating set of $2g(g - 1)$ Dehn twists for $g \geq 3$.
A significant improvement came from Lickorish~\cite{lickorish}, who proved in 1964 that $3g-1$ Dehn twists about nonseparating curves are sufficient to generate the group for any $g \ge 1$.
Later, Humphries~\cite{humphries} refined this result, showing that $2g+1$ Dehn twists suffice.
Humphries also proved that this is the minimal number of Dehn twist generators required for $\text{Mod}(\Sigma_g)$ when $g \ge 1$.
A standard depiction of this generating set is shown in Figure~\ref{humphries}.
\begin{figure}[h]
\begin{center}
\scalebox{0.34}{\includegraphics{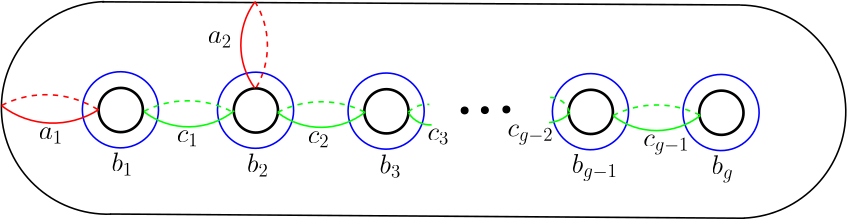}}
\caption{Humphries' generating set\index{Humphries generating set}}
\label{humphries}
\end{center}
\end{figure}

\subsection{Elementary Mapping Class Groups}
\begin{itemize}
    \item [(i)] $\Mod{}(\text{disk}) = \{\text{id}\}$ (Alexander Lemma\index{Alexander Lemma}).
    \item [(ii)]  $\Mod{}(\text{sphere}) = \{\text{id}\}$.
    \item [(iii)] $\Mod{}(\text{annulus}) \cong \mathbb{Z}$, generated by a Dehn twist $t_c$ about the core curve $c$.
    \item [(iv)] $\Mod{}(\text{disk with $n$ marked points}) \cong B_n$ (braid group\index{braid group}).
\end{itemize}

In the study of surfaces and their symmetries, two other fundamental objects emerge: the Teichmüller space and the moduli space.
These interconnected concepts play crucial roles in geometric group theory, particularly in the investigation of generating sets for mapping class groups.
Before we finish this section, let us give their definitions, relationships, and importance in the context of geometric group theory.
Let $\Sigma$ denote an orientable  surface that is either  closed, has punctures, or has boundary components.
\begin{definition}
The \textbf{Teichmüller space}\index{Teichmüller space} $\mathcal{T}(\Sigma)$ is the space of marked complex structures on $\Sigma$ up to isotopy.
Formally:
\[
\mathcal{T}(\Sigma) = \{(X,f) \mid X \text{ Riemann surface}, f: \Sigma \to X \text{ diffeomorphism}\} / \sim
\]
where $(X,f) \sim (Y,g)$ if $g \circ f^{-1}: X \to Y$ is isotopic to a biholomorphism.
\end{definition}

\begin{definition}
The \textbf{moduli space}\index{moduli space} $\mathcal{M}$ is the space of isomorphism classes of complex structures on $\Sigma$:
\[
\mathcal{M} = \{\text{Riemann surfaces of genus } g \text{ with } n \text{ marked points}\} / \text{biholomorphism}.
\]
\end{definition}

These spaces are related through fundamental group actions and quotient constructions:

\begin{enumerate}
\item The mapping class group $\text{Mod}(\Sigma)$ acts properly discontinuously on $\mathcal{T}(\Sigma)$ by changing markings: 
\[
\varphi \cdot [(X,f)] = [(X, f \circ \varphi^{-1})].
\]

\item The moduli space is the quotient of Teichmüller space by this action:
\[
\mathcal{M} \cong \mathcal{T}(\Sigma) / \text{Mod}(\Sigma).
\]

\item For surfaces with $3g-3+n > 0$, $\mathcal{T}(\Sigma)$ is contractible and $\text{Mod}(\Sigma)$ acts with finite stabilizers, making $\mathcal{M}$ an orbifold with fundamental group $\text{Mod}(\Sigma)$.
\end{enumerate}

These objects provide critical frameworks for studying mapping class groups as finitely generated groups:

\begin{itemize}
\item[-] \text{Geometric realization:} The action of $\text{Mod}(\Sigma)$ on $\mathcal{T}(\Sigma)$ endows the group with a geometric structure.
When equipped with the Weil--Petersson metric\index{Weil-Petersson metric}, Teichmüller space becomes a CAT(0) space, allowing the application of geometric group theory techniques.
\item[-]  \text{Group cohomology:} The contractibility of $\mathcal{T}(\Sigma)$ makes it an Eilenberg--MacLane space for $\text{Mod}(\Sigma)$, enabling computation of group cohomology via the orbifold cohomology of $\mathcal{M}$.
\item[-]  \text{Generating set investigations:} The geometric action provides tools to study generating sets:
\begin{itemize}
\item The Nielsen--Thurston classification\index{Nielsen-Thurston classification} of mapping classes (periodic, reducible, pseudo-Anosov) is defined via their action on $\mathcal{T}(\Sigma)$.
\item Geometric intersection numbers on surfaces translate to displacement distances in $\mathcal{T}(\Sigma)$.
\item The Teichmüller space $\mathcal{T}(\Sigma)$, specifically when equipped with the Weil--Petersson metric, possesses strictly negative sectional curvature~\cite{brock}.
This strong geometric property ensures that geodesics diverge exponentially, which is the exact geometric requirement for establishing the necessary separation (disjoint domains) needed to apply the Ping--Pong Lemma and construct free generating sets.
\end{itemize}

\item[-]  \text{Rigidity phenomena:} The action on $\mathcal{T}(\Sigma)$ exhibits various rigidity properties, which constrain possible group actions and inform minimal generation requirements.
\item[-]  \text{Asymptotic geometry:} The large-scale geometry of $\text{Mod}(\Sigma)$, studied via its Cayley graph, is intimately related to the geometry of $\mathcal{T}(\Sigma)$ through the \textit{orbit map}.
This connection is essential for:
\begin{itemize}
\item Understanding growth functions of the group,
\item Studying distortion of subgroups,
\item Analyzing the geometry of word metrics relative to minimal generating sets.
\end{itemize}
\end{itemize}

The geometric perspective informs key results about generating sets for $\text{Mod}(\Sigma)$:
\begin{itemize}
\item[-]  The action on $\mathcal{T}(\Sigma)$ shows that $\text{Mod}(\Sigma)$ is generated by Dehn twists since these act as twist isometries on Teichmüller space
\item[-]  The non-uniform negative curvature of $\mathcal{T}(\Sigma)$ (with the Weil--Petersson metric) explains why certain generating sets (like Humphries' $2g+1$ twists) are minimal - removing generators creates geometric obstructions to connecting orbits
\item[-]  Pseudo-Anosov elements\index{pseudo-Anosov mapping class} (detected by their translation length in $\mathcal{T}(\Sigma)$) serve as test elements when examining independence in generating sets
\item[-]  The quotient $\mathcal{M} = \mathcal{T}(\Sigma)/\text{Mod}(\Sigma)$ has orbifold fundamental group $\text{Mod}(\Sigma)$, making topological properties 
of moduli space correspond to group-theoretic properties of generators
\end{itemize}

Mapping class groups act naturally on both Teichmüller space and the moduli space, and these actions yield concrete geometric insights into $\Mod(\Sigma)$.
For example, Masur and Minsky’s hierarchy machinery in the curve complex\index{curve complex} provides explicit diameter and distance estimates, which in turn give bounds on the minimal number of Dehn twists needed to generate $\Mod(\Sigma)$ \cite{masur-minsky}.
Similarly, Brock’s comparison of the Weil–Petersson metric with the pants graph shows how coarse geometry governs word‑length growth \cite{brock}.
Together, these frameworks equip us with a unified set of tools to unravel the combinatorial complexity and algebraic depth of surface mapping class groups.
\begin{remark}
The natural surjection 
$$
\Mod(\Sigma_g)\;\longrightarrow\; \textrm{Sp}(2g,\mathbb Z)
$$
induced by the action on \(H_1(\Sigma_g;\mathbb Z)\), carries each Dehn twist to a symplectic transvection.
Hence any  generating set for $\Mod(\Sigma_g)$ descends to a generating set for $\textrm{Sp}(2g,\mathbb Z)$.
\end{remark}

\section{Minimal Generation of $\Mod(\Sigma_g)$} \label{sec:wajnryb-korkmaz}
Understanding minimal generating sets\index{generating set!minimal} for mapping class groups is a fundamental problem in geometric topology and group theory.
The mapping class group $\Mod(\Sigma_g)$ of a closed surface $\Sigma_g$ is finitely generated, but the structure and properties of its generating sets can vary widely depending on the types of elements allowed (e.g., Dehn twists, torsion elements, involutions).
Studying minimal generating sets provides insight into the algebraic complexity of $\mathrm{Mod}(\Sigma_g)$ and allows for more efficient solutions to algorithmic problems, such as the word and conjugacy problems.
Moreover, minimal generators often admit natural geometric or dynamical interpretations, revealing the essential building blocks of surface diffeomorphisms up to isotopy.
These generating sets also play a central role in understanding actions of $\mathrm{Mod}(\Sigma_g)$ on various combinatorial and geometric complexes, and they connect deeply with applications in Teichmüller theory, $3$-manifold topology, and topological quantum field theory.
Building on decades of work, the following result reduces the generator set  to its absolute minimum, revealing an unexpectedly tight algebraic framework for mapping class groups.
\begin{theorem}[Wajnryb \cite{wajnryb}]\label{thm:wajnryb}
The mapping class group of a compact orientable surface of genus $g\geq 1$, closed or with one boundary component, can be generated by two elements.
\end{theorem}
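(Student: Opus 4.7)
The plan is to reduce the theorem to Humphries' generating set $\{t_{a_1},\ldots,t_{a_{2g+1}}\}$ and then to exhibit two elements $A,B\in\Mod(\Sigma_g)$ such that each $t_{a_i}$ lies in $\langle A,B\rangle$. The natural strategy is to let one generator serve as a ``seed'' Dehn twist and the other as a ``mixer'' whose geometric action on the surface transports the seed, by conjugation, onto every curve in the Humphries set. Once every Humphries twist is recovered, the previous section's theorem of Humphries finishes the job.

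First I would fix the Humphries configuration: curves $a_1,\ldots,a_{2g}$ forming a chain with $i(a_j,a_{j+1})=1$ and disjoint otherwise, plus the auxiliary curve $a_{2g+1}$. A natural candidate pair is $A=t_{a_{2g+1}}$ and $B=t_{a_1}t_{a_2}\cdots t_{a_{2g}}$, possibly with alternating signs inserted to make the action of $B$ on each $a_i$ easier to track. For the one-holed surface $\Sigma_g^1$, the same pair should work, because the boundary twist can be extracted from the chain $t_{a_1}\cdots t_{a_{2g}}$ via the chain relation, reducing this case to the closed one.

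The technical heart of the argument would be to analyze the orbit $\{B^k A B^{-k}\}=\{t_{B^k(a_{2g+1})}\}$ and show that, modulo the braid and commutativity relations already listed in the excerpt, the resulting words in $A$ and $B$ permit one to peel off each $t_{a_i}$ individually. Here I would lean on the conjugation identity $ft_cf^{-1}=t_{f(c)}$ together with the fact that $B$ shears each curve $a_i$ across its neighbours in the chain, so the sequence $B^k(a_{2g+1})$ evolves into a rich family of curves whose associated twists generate, through multiplication and further conjugation, the whole Humphries set.

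The principal obstacle will be the combinatorial bookkeeping. Unlike situations where a finite-order rotation cyclically permutes the generators, the Humphries curves admit no such symmetry, so the mixing must be achieved by the infinite-order element $B$, and one must verify that its orbits are rich enough rather than degenerating into a proper subgroup. Showing that the chosen $A$ and $B$ really do produce every $t_{a_i}$ is likely to require explicit calculation in a low-genus base case $g=1$ or $g=2$ (where the pair can be exhibited and checked directly) followed by an inductive step that propagates the generation property along the chain as $g$ grows.
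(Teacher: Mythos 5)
Your overall strategy --- one ``seed'' Dehn twist plus one ``mixer'' whose conjugates of the seed sweep out the Humphries twists --- is exactly the mechanism behind the results surveyed here: Wajnryb's pair is the chain product $S=t_{a_1}t_{b_1}t_{c_1}\cdots t_{b_g}$ together with $R=t_{a_{g-1}}t_{a_g}^{-1}$, and Korkmaz's refinement (Theorem~\ref{thm:korkmaz}) replaces $R$ by the single twist $t_{a_2}$ and recovers the Humphries twists from the orbit of $a_2$ under powers of $S$; your pair $(t_{a_{2g+1}},\,t_{a_1}\cdots t_{a_{2g}})$ is this pair up to relabeling. But your diagnosis of the ``principal obstacle'' contains a genuine error. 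On the closed surface the chain product $B=t_{a_1}\cdots t_{a_{2g}}$ is \emph{not} of infinite order, and the Humphries chain \emph{does} admit precisely the cyclic symmetry you claim is absent: by the $2g$-chain relation (the higher analogue of the one- and two-holed torus relations recalled above), $B^{4g+2}$ is the twist about the boundary of a regular neighborhood of the chain, which bounds a disk in $\Sigma_g$, so $B^{4g+2}=1$. In fact $B$ is a periodic element of order $4g+2$, realizable as a rigid rotation transporting each chain curve to the next (one checks $(t_{c_1}\cdots t_{c_n})(c_i)=c_{i+1}$ directly from the braid and commutativity relations). This periodicity is not incidental; it is the engine of both known proofs, making the orbit of the seed curve finite and explicitly computable, so that the Humphries twists are recovered by uniform braid- and lantern-relation manipulations valid for all $g$ at once. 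Your proposed substitutes --- tracking infinite orbits of a shearing map and inducting on genus --- are exactly the unbounded bookkeeping the chain relation lets you avoid, and no known argument runs that way.

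Your reduction for $\Sigma_g^1$ also runs in the wrong direction. On $\Sigma_g^1$ the chain relation gives $B^{4g+2}=t_\partial$, so $t_\partial\in\langle A,B\rangle$; but this together with generation in the closed case does not give $\langle A,B\rangle=\Mod(\Sigma_g^1)$, because the kernel of the capping homomorphism $\Mod(\Sigma_g^1)\to\Mod(\Sigma_g)$ is generated by $t_\partial$ only when the capping disk carries a marked point. For unmarked capping the kernel is isomorphic to $\pi_1(UT\Sigma_g)$, the disk-pushing subgroup, of which $\langle t_\partial\rangle$ is only the central fiber; your hypotheses yield merely that $\langle A,B\rangle$ together with this kernel fills out $\Mod(\Sigma_g^1)$. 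The correct order is the reverse: carry out the orbit computation directly on $\Sigma_g^1$ (where $B$ has infinite order but still shifts the chain curves, which is the property the computation exploits), invoke Theorem~\ref{thm:johnson} to conclude that the $2g+1$ Humphries twists generate $\Mod(\Sigma_g^1)$, and then obtain the closed case for free, since capping is a surjection carrying your pair to the corresponding pair on $\Sigma_g$. With these two repairs --- identifying $B$ as a rotation of order $4g+2$ via the chain relation, and proving the one-boundary case first --- your outline becomes the Wajnryb--Korkmaz argument for $g\ge 3$; the cases $g=1,2$ then require the separate low-genus checks you anticipate (for $g=1$, $\Mod(\Sigma_1)\cong\mathrm{SL}(2,\mathbb{Z})$ is visibly $2$-generated).
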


This result is surprising because the mapping class group is highly non-abelian and complex, and previous generating sets were much larger.
Wajnryb's theorem reveals remarkable economy in mapping class group structure.
The proof demonstrates deep interplay between geometric topology and combinatorial group theory, influencing subsequent work on:
\begin{itemize}
\item[(i)] Finite generation questions for related groups,
\item[(ii)] geometric realization problems,
\item[(iii)] complexity of mapping class group algorithms.
\end{itemize}

In $2005$, Korkmaz proved that one of these generators can be taken as a Dehn twist.
\begin{theorem}[Korkmaz \cite{korkmaz2005}]\label{thm:korkmaz}
For any closed oriented surface \(\Sigma_g\) of genus \(g \geq 3\), the mapping class group $\mathrm{Mod}(\Sigma_g)$ is generated by (see Figure~\ref{humphries}):
\begin{itemize}
\item[(i)] A single Dehn twist $t_{a_2}$ about a non-separating curve $a_2$,
\item[(ii)] A torsion element $S=t_{b_g} t_{c_{g-1}} t_{b_{g-1}} \ldots t_{c_1} t_{b_1} t_{a_1}$ of order $4g+2$ acting as a rotational symmetry.
\end{itemize}
\end{theorem}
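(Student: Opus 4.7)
The plan is to reduce the problem to Humphries' theorem, which is available from the previous subsection: the Dehn twists $t_{a_1}, t_{a_2}, t_{b_1}, t_{c_1}, t_{b_2}, \ldots, t_{c_{g-1}}, t_{b_g}$ generate $\mathrm{Mod}(\Sigma_g)$. It therefore suffices to prove that every one of these $2g+1$ Humphries twists lies in the subgroup $H := \langle t_{a_2}, S \rangle$. The core of the argument is to recover each Humphries generator from $t_{a_2}$ and $S$ via conjugation by powers of $S$, combined with the standard braid, commutation, chain, and lantern relations recorded earlier.

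The first step is to confirm that $S = t_{b_g} t_{c_{g-1}} t_{b_{g-1}} \cdots t_{c_1} t_{b_1} t_{a_1}$ has order $4g+2$. I would realize $\Sigma_g$ as a surface in $\mathbb{R}^3$ invariant under a rotational symmetry of order $4g+2$, and identify $S$ (using the appropriate chain relation, an iterate of the one-holed torus relation depicted in Figure~\ref{chain_torus_one}) with the mapping class of a $2\pi/(4g+2)$ rotation about the axis. This yields both the claimed order and, crucially, an explicit geometric model for how $S$ permutes simple closed curves on $\Sigma_g$.

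The second step is the decisive one: trace the $S$-orbit of the curve $a_2$. Using the conjugation relation $S t_c S^{-1} = t_{S(c)}$, every element $S^k t_{a_2} S^{-k}$ is a Dehn twist about $S^k(a_2)$ and hence lies in $H$. The main obstacle is the detailed curve bookkeeping: in general $S^k(a_2)$ will not literally equal a Humphries curve, so one must show that the twists about the orbit $\{S^k(a_2)\}$ either directly include, or algebraically synthesize, each of $t_{a_1}, t_{b_1}, t_{c_1}, \ldots, t_{b_g}$. The key tools here are the braid relation $t_c t_d t_c = t_d t_c t_d$ (applied whenever two orbit curves meet once), commutation of disjoint orbit twists, and in places the lantern relation on an embedded $\Sigma_0^4$, the latter being available precisely because $g \geq 3$.

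Once every Humphries generator has been placed in $H$, Humphries' theorem yields $H = \mathrm{Mod}(\Sigma_g)$, completing the proof. The hardest part of the program is unquestionably the second step: identifying the $S$-orbit of $a_2$ precisely enough to organize the braid/lantern manipulations that produce each remaining Humphries twist. The remaining ingredients (order of $S$, rotational model, Humphries reduction) are comparatively standard once the rotational picture is set up.
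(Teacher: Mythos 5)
Your proposal takes essentially the same approach as the paper's own (sketch of the) proof: realize $S$ as a rotation of order $4g+2$, use the conjugation relation to study the $S$-orbit of $a_2$, recover all Humphries twists in $\langle t_{a_2}, S\rangle$ via braid and lantern relations, and conclude by Humphries' theorem. Like the survey itself, you leave the explicit orbit bookkeeping (the technical heart of Korkmaz's argument) at the level of a plan, but the strategy and its ingredients match the paper's.
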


 Korkmaz shows that the subgroup $G = \langle t_{a_2}, S \rangle$ contains Dehn twists about all Humphries curves.
This is accomplished by analyzing the orbit of the curve $a_2$ under conjugation by powers of $S$.
Once one $a_i$ is recovered, the cyclic action of $S$ generates all other $a_i$, and hence the full Humphries generating set.
Therefore, $G$ contains all Humphries twists, and hence $\Mod(\Sigma_g)$.

 The significance of these minimal generating sets extends beyond their algebraic compactness, offering a new geometric framework for understanding the action of the mapping class group.
Korkmaz’s generating pair is particularly insightful in this regard. The Dehn twist provides a localized geometric action tied to a specific curve, while the rotational symmetry $S$ acts globally, organizing the topology of the entire surface.
The combined action of these two generators on Teichmüller space, $\mathcal{T}(\Sigma_{g})$, is rich enough to describe the entire group's dynamics.
This interplay between a local twist and a global symmetry allows for a more direct analysis of the geometry of $\mathcal{T}(\Sigma_{g})$ and the structure of its resulting quotient orbifold, the moduli space.
Furthermore, the very minimality of this generating set simplifies the study of complex dynamical phenomena;
key properties, such as the ergodicity of the mapping class group action or the distribution of pseudo-Anosov axes, can now be investigated by analyzing compositions of just these two specific geometric transformations \cite{farb-margalit}.
\text{Impact on Teichmüller Theory:}  
  
\begin{itemize}  
    \item [(i)]These generators give explicit coordinates for studying the quotient orbifold $\mathcal{T}(\Sigma_g)/\mathrm{Mod}(\Sigma_g)$, which is the moduli space of Riemann surfaces.
    \item [(ii)] They provide a concrete way to study pseudo-Anosov elements through products of these generators.
\end{itemize}  

\noindent The geometric insights provided by these minimal generators extend naturally to the realm of $3$-manifolds through surface dynamics.
The Dehn twist-torsion pair $(t_{a_2}, \rho)$ not only coordinates the action on Teichmüller space but also directly governs the construction of $3$-manifolds via Heegaard splittings\index{Heegaard splitting}~\cite{birman2001}.
Specifically, the rotational symmetry $\rho$ encodes periodic identifications of handlebodies, while compositions of $t_{a_2}$ and $\rho$ produce the essential complexity required to build hyperbolic structures~\cite{thurston1982}.
This duality reflects a deep reciprocity: the generators' efficiency in describing Teichmüller geodesics translates to algorithmic advantages in $3$-manifold topology, where minimal generating sets simplify computations of Heegaard Floer homology\index{Heegaard Floer homology} and surgery coefficients~\cite{ozsvath-szabo2004}.
Furthermore, the relationship between pseudo-Anosov elements (arising from products of $t_{a_2}$ and $\rho$) and hyperbolic $3$-manifold geometries becomes computationally tractable precisely because the generators' limited number constrains the combinatorial complexity of gluing maps.
Thus, the geometric economy achieved in Teichmüller theory propagates to practical advances in $3$-manifold classification and invariant calculation.
\text{Applications to $3$-Manifold Topology}:  
  
\begin{itemize}  
    \item [(i)] Every closed oriented $3$-manifold can be obtained by performing Dehn surgery on a link in $S^3$, or alternatively by gluing two handlebodies via a surface homeomorphism~\cite{birman2001}.
    \item [(ii)]These generators provide a minimal way to describe such gluing maps, leading to:  
        \begin{itemize}  
            \item [-] Simplified presentations of $3$-manifolds through Heegaard splittings.
            \item [-]  More efficient algorithms for computing $3$-manifold invariants.
            \item [-]  Better understanding of the relationship between surface maps and $3$-manifold structures.
        \end{itemize}  
    \item[(iii)] The generators give a way to study how properties of $3$-manifolds (like hyperbolic structures) depend on the gluing map~\cite{thurston1982}.
    \item [(iv)] They provide insight into the Heegaard Floer homology through their action on curves and handlebodies~\cite{ozsvath-szabo2004}.
\end{itemize}  
\section{GENERATION BY TORSION ELEMENTS}
A major development in the algebraic understanding of mapping class groups is the realization that they can be generated by elements of finite order\index{torsion element!generators}.
This connects the theory to finite group actions, modular forms, and the automorphism groups of Riemann surfaces.
Particularly notable is the discovery that small-order torsion elements often suffice to generate the entire group.
The study of generating sets composed entirely of torsion elements dates back to Maclachlan~\cite{maclachlan}.
He proved that $\Mod(\Sigma_g)$ is generated by conjugates of just two elements, of orders $2g + 2$ and $4g + 2$, and applied this to show the simple connectivity of the moduli space of Riemann surfaces.
Since $\operatorname{Mod}(\Sigma_g)$ is non-cyclic for $g \geq 1$, it cannot be generated by a single element, making the minimal number of torsion generators a fundamental question.
Subsequent work has progressively refined these bounds:
\begin{itemize}
    \item[(i)] Lu~\cite{Lu} constructed a generating set for $\Mod(\Sigma_g)$ consisting of three elements, two of which are of finite order.
These are: a Dehn twist $t_{a_1}$ about the simple closed curve $a_1$;
a composition $N$ of five Dehn twists along five carefully chosen curves supported near the first two handles of the surface;
and a rotational symmetry $R$ of order $g$ that cyclically permutes the handles.
Lu showed that all of Lickorish’s generators can be expressed in terms of these three elements.
Specifically, the curves $a_i$ (see Figure~\ref{humphries}) are obtained as conjugates of $a_1$ under powers of $R$, while the $b_i$ curves arise as conjugates of $N^{-1}t_{a_1}N$ under powers of $R$.
    \item[(ii)]Wajnryb~\cite{wajnryb} constructed a generating set for $\Mod(\Sigma_g)$ consisting of just two elements, one of which is of finite order.
He defined the product of Dehn twists along standard curves (see Figure~\ref{humphries}) $S=t_{a_1}t_{b_1}t_{c_1}t_{b_2}t_{c_2} \ldots t_{b_{g-1}}t_{c_{g-1}}t_{b_g}$ and $R= t_{a_{g-1}}t^{-1}_{a_{g}}$, where the $a_i$'s are as in Figure~\ref{NG}.
He showed that all of Humphries’ generators can be expressed as combinations of these two elements, thereby establishing an efficient generating set for the mapping class group.
    \item[(iii)] Korkmaz (recall Theorem~\ref{thm:korkmaz}) improved Wajnryb's result by constructing a generating set consisting of two elements: the finite-order element $S = t_{b_g} t_{c_{g-1}} \cdots t_{a_1}$ and the Dehn twist $t_{a_2}$.
    \item[(iv)]  Monden~\cite{Monden} showed generation by three elements of order~$3$, or four of order~$4$.
His approach involves cutting the surface along the standard Humphries generators to decompose it into $g/2$ or $(g-1)/2$ surfaces depending on whether the genus $g$ is even or odd.
On each of these subsurfaces, he defines rotational elements of order~$3$, and introduces two $2$-chains such that their composition with the rotational element also yields an element of order~$3$.
Using these three torsion elements, he was able to recover all of the Humphries generators.
A similar strategy was employed to construct four generators of order~$4$.
    \item[(v)] Lanier~\cite{lanier2018} proved that for any integer $k \geq 6$, the mapping class group $\Mod(\Sigma_g)$ is generated by three elements of order $k$ provided that the genus $g$ satisfies $g \geq 2((k-1)^2 + 1)$.
His construction begins with a $k$-fold symmetric embedding $S_g$ of $\Sigma_g$ in $\mathbb{R}^3$ where a rotation $r$ of $\Sigma_g$  by an angle of $2\pi/k$ about its axis yields an element of order $k$ similar to~\cite{Monden}.
To obtain the remaining elements of order $k$, Lanier embeds $k$-symmetric subsurfaces $\sigma_i$ into $\Sigma_g$	 and within each $\sigma_i$, he places a chain of simple closed curves of length $2g_i-1$, where $g_i$ is the genus of $\sigma_i$.
He then constructs the required elements by mapping $S_g$ to $\Sigma_g$, applying the rotation $r$, and mapping back.
This careful use of symmetry and surface decomposition yields three torsion elements of order $k$ that generate the entire group.
    \item[(vi)]  Yildiz~\cite{yildiz2} achieved generation by two elements following similar ideas as in   Theorem \ref{thm:korkmaz-involution}.
He obtained
    \begin{itemize}
        \item For $g \geq 6$, two elements of order $g$;
        \item For $g \geq 7$, two elements of orders $g$ and $g'$ (where $g' > 2$ is the smallest divisor of $g$);
        \item For $g \geq 3k^2 + 4k + 1$ and $k \in \mathbb{Z}^+$, two elements of order $g / \gcd(g,k)$.
    \end{itemize}
    
\end{itemize}

\noindent The pursuit of torsion generators is motivated by their intrinsic advantages:

\begin{itemize}
  \item[(i)] Simplicity: Low-order elements (e.g., involutions or order-$3$ torsion) admit concise algebraic descriptions and facilitate computational manipulation.
  \item[(ii)] Symmetry: Torsion elements often arise from geometric symmetries (e.g., hyperelliptic involutions or rotational symmetries), linking group structure to surface geometry.
  \item[(iii)] Representation Theory: They simplify the construction of linear and modular representations of $\Mod(\Sigma_g)$.
  \item[(iv)] Topological Interpretability: Relations between torsion generators frequently encode geometric operations (e.g., rotations or reflections), offering intuitive decompositions of mapping classes.
\end{itemize}

These constructions reveal the deep interplay between geometry and group theory.
That such complex groups can be generated by a few symmetric, low-order elements highlights their rigidity and hidden structure.
\subsection{Involution Generators}\label{subsec:involution}
Involutions\index{involution} often correspond to geometric symmetries of the surface, such as reflections.
Generating the mapping class group by such elements\index{mapping class group!involution generators} can provide a more intuitive and geometrically meaningful way to decompose complex surface transformations into simpler, more symmetric building blocks.
This approach can illuminate the relationship between the algebraic structure of $\Mod(\Sigma_g)$ and the geometric symmetries of $\Sigma_g$, potentially leading to new insights and computational advantages.
The fact that the mapping class group for a closed surface of genus $g \ge 3$ can be generated by involutions was first established by McCarthy and Papadopoulos~\cite{mccarthy-papadopoulos}.
Furthermore, the problem of generating a group by elements of a specific type is a common theme in group theory.
For the mapping class group, the focus on involutions is motivated by several factors.
Firstly, involutions are often easier to understand and can be classified geometrically.
For instance, on a closed orientable surface, involutions are related to hyperelliptic involutions\index{hyperelliptic involution} and other symmetries.
Secondly, the existence of a generating set consisting of involutions can have implications for the group's algebraic properties, such as its residual finiteness or its relationship to other well-studied groups.
Finally, exploring generation by involutions can lead to connections with other areas of mathematics, such as the theory of braid groups and Coxeter groups, which also have natural representations involving reflections or elements of order two.
Thus, the investigation into generating the mapping class group by involutions is driven by a desire for geometric clarity, algebraic insight, and potential connections to broader mathematical frameworks.
Building upon these motivations, significant progress has been made in determining the minimal number of involutions required to generate the mapping class group.
A notable result in this direction is the following:

\begin{theorem} \cite{brendle-farb} \label{thm:brendle-farb}
For $g \geq 3$, $\operatorname{Mod}(\Sigma_g)$ is generated by $6$ involutions.
\end{theorem}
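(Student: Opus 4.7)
The plan is to express $\Mod(\Sigma_g)$ as being generated by three torsion elements, and then to convert each torsion generator into a product of two involutions via a standard dihedral identity, producing six involutions in total. The identity in question is that, for any element $T \in \Mod(\Sigma_g)$ and any involution $\iota \in \Mod(\Sigma_g)$ satisfying $\iota T \iota = T^{-1}$, one has
\[
T = \iota \cdot (\iota T),
\]
and $\iota T$ is itself an involution because $(\iota T)^2 = \iota T \iota T = T^{-1} T = 1$.

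To realize the plan, I would first exhibit three torsion elements generating $\Mod(\Sigma_g)$. A natural starting point is the rotational symmetry $S$ of order $4g+2$ from Korkmaz's Theorem \ref{thm:korkmaz}, supplemented by two further torsion elements arising from rotational symmetries of a standard symmetric embedding of $\Sigma_g$ in $\mathbb{R}^3$ about different axes; alternatively, the three-torsion-element generating sets of Monden or Lanier described earlier in this chapter could be used directly. Next, for each torsion element $T_i$ I would produce an inverting involution $\iota_i \in \Mod(\Sigma_g)$ realized as the $\pi$-rotation about an axis perpendicular to the rotation axis of $T_i$. With a judicious choice of axes, a single hyperelliptic-type involution can serve as the common inverter for all three $T_i$, and the resulting six involutions $\iota_1,\,\iota_1 T_1,\,\iota_2,\,\iota_2 T_2,\,\iota_3,\,\iota_3 T_3$ then generate $\Mod(\Sigma_g)$.

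The main obstacle is to guarantee that every involution produced is orientation-preserving and hence belongs to $\Mod(\Sigma_g)$ rather than to its index-two orientation-reversing extension. Natural inverters of rotational symmetries tend to be reflections, which reverse orientation; replacing a reflection by a $\pi$-rotation constrains the geometric configuration significantly. The subtlety lies in positioning the three rotation axes and the common inverting involution so that all six resulting elements lie in the orientation-preserving mapping class group, and so that the three torsion generators genuinely generate the whole group rather than a proper subgroup such as the hyperelliptic subgroup. The hypothesis $g \geq 3$ provides sufficient room on the surface for such a symmetric configuration to coexist; smaller genera would demand a separate analysis and possibly increase the required number of involutions.
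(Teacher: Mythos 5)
Your dihedral identity is correct, and your instinct to avoid Dehn twists is sound: no orientation-preserving involution can invert a Dehn twist, since $\iota t_a \iota^{-1} = t_{\iota(a)}$ is again a \emph{right} twist and a right twist about an essential curve never equals a left twist. This is precisely why Brendle and Farb do not take your route at all: their proof starts from the four-element generating set $\{t_\alpha, t_\beta, t_\gamma, R_g\}$, writes the rotation as $R_g = \rho_1\rho_2$, and then uses the lantern relation (via a ``good lantern'' embedded so that $\alpha, \beta, \gamma$ are three of its boundary curves) together with pair-swap involutions $J_1, J_2, J_3, J_4$ to express each twist as a product of involutions, with Kassabov's observation $J_4 = J_2 J_3 J_2$ cutting the count from seven to six. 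Your proposal is a genuinely different strategy, but it has a fatal gap at its core construction: if your three torsion elements are all realized as rotational symmetries of a \emph{single} symmetric embedding of $\Sigma_g$ in $\mathbb{R}^3$ (about different axes), then they all lie in the symmetry group of that one embedded surface, which is a \emph{finite} subgroup of $\Mod(\Sigma_g)$ --- so they cannot generate the infinite group $\Mod(\Sigma_g)$, and neither can your six involutions, which by the ``common inverter'' arrangement would lie in the same finite group. Monden and Lanier avoid exactly this trap by conjugating their rotations by homeomorphisms (chain twists, subsurface embeddings), producing torsion elements that are \emph{not} simultaneously realizable as isometries of one structure.

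That leaves your fallback of using Monden's or Lanier's torsion generators directly, but this does not repair the argument as stated. Lanier's bound $g \geq 2((k-1)^2+1)$ gives at best $g \geq 52$ for $k = 6$, far from covering $g \geq 3$. More importantly, for either choice you still owe the crucial unproven step: that each specific composite torsion generator (e.g., Monden's order-$3$ elements built from a rotation composed with $2$-chain twists) is \emph{strongly real} in $\Mod(\Sigma_g)$, i.e., inverted by conjugation by some orientation-preserving involution. Not every finite-order mapping class has this property, and nothing in your sketch produces such an inverter for these particular elements --- ``judicious choice of axes'' is not available once the generators are no longer rotations of a common embedding. Without that, the identity $T_i = \iota_i(\iota_i T_i)$ has nothing to act on, and the proof does not close. (There is also a chronological oddity: Monden's torsion result postdates Brendle--Farb, though that affects only attribution, not logic.)
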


The proof that the mapping class group $\operatorname{Mod}(\Sigma_g)$is generated by $6$ involutions is constructive.
The strategy is to start with a known, small set of generators and show that each can be expressed as a product of elements from a set of just six involutions.
The authors begin with a known generating set for $\operatorname{Mod}(\Sigma_g)$ consisting of four elements: three Dehn twists, $t_{\alpha}$, $t_{\beta}$, $t_{\gamma}$, and a rotation $R_g$.
The rotation $R_g$ together with the three twists generate all of Lickorish's $3g-1$ twist generators.
The core of the proof is to express these four generators using a small, common set of involutions.
\begin{enumerate}
    \item[(i)] \text{The Rotation $R_g$:} The rotation $R_g$ is shown to be the product of two involutions, $\rho_1$ and $\rho_2$, which are rotations by $\pi$ about specific axes.
So, $R_g = \rho_1 \rho_2$. These are the first two of the six required involutions.
    \item[(ii)] \text{The Lantern Relation:} To decompose the Dehn twists, the authors use the lantern relation.
A key insight is showing that a Dehn twist about a non-separating curve can be written as the product of four involutions.
    \item[(iii)] \text{The Good Lantern Embedding:} A crucial step is the specific embedding of a good lantern into the surface such that the curves $\alpha$, $\beta$, and $\gamma$ from the generating set serve as three of its four boundary components (denoted $a_1$, $a_3$, and $a_4$ respectively) as shown in Figure~\ref{goodlantern}.
    \begin{figure}[h]
\begin{center}
\scalebox{0.25}{\includegraphics{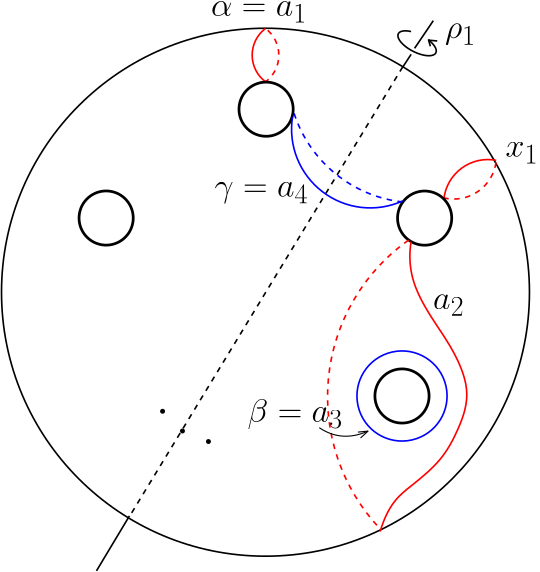}}
\caption{A good lantern embedded with three boundary curves $\alpha$, $\beta$ and $\gamma$ and one interior curve $x_1$.}
\label{goodlantern}
\end{center}
\end{figure}

    \item[(iv)] \text{Expressing Dehn Twists:} With this setup, the authors use a series of pair-swap involutions ($J_1, J_2, J_3$) in combination with the existing involution $\rho_1$ to build the required Dehn twists.
    \begin{itemize}
        \item[-] $t_\gamma$ is generated using $\rho_1$ and three new involutions: $X_1\rho_1X_1^{-1}$, $J_1$, and $J_2$.
However, since $X_1$ is a twist about the curve $\alpha$, and $\rho_1$ takes $\alpha$ to $x_1$, the term $X_1\rho_1X_1^{-1}$ is conjugate to $\rho_1$ itself.
The truly new involutions are $J_1$ and $J_2$.
        \item[-] $t_\beta$ is then generated using the same set of involutions plus one new pair-swap involution, $J_3$.
        \item[-] $t_\alpha$ is shown to be generated using a fourth pair-swap, $J_4$, bringing the total count to $2+3+1+1=7$ involutions ($\rho_1, \rho_2, J_1, J_2, J_3, J_4$ and a conjugate of $\rho_1$).
    \end{itemize}
\end{enumerate}

The final step in the proof, attributed to  Kassabov, is the observation that the involution $J_4$ is redundant.
It can be expressed as a product of other involutions already in the set: $J_4 = J_2J_3J_2$.
This eliminates the need for a seventh involution, completing the proof that $\Mod(\Sigma_g)$ can be generated by just $6$ involutions.
\begin{remark}
For closed orientable surfaces of genus \(g \leq 2\), the mapping class group \(\operatorname{Mod}(\Sigma_g)\) cannot be generated entirely by involutions.
This limitation arises from structural and algebraic properties specific to low-genus cases:

For $g=1$, the mapping class group satisfies:
\[
\operatorname{Mod}(\Sigma_1) \cong \operatorname{SL}(2, \mathbb{Z}).
\]
Its abelianization is:
\[
H_1(\operatorname{Mod}(\Sigma_1); \mathbb{Z}) \cong \mathbb{Z}/12\mathbb{Z}.
\]
Since involutions map to torsion elements of order $\leq 2$ in the abelianization, they cannot generate the full group, which contains elements of order $12$.
For $g=2$, the mapping class group \(\operatorname{Mod}(\Sigma_2)\) has abelianization:
\[
H_1(\operatorname{Mod}(\Sigma_2); \mathbb{Z}) \cong \mathbb{Z}/10\mathbb{Z}.
\]
Again, involutions cannot generate elements of order $10$ in the abelianization.  This is insufficient to span the full group.
\end{remark}

Kassabov~\cite{kassabov} substantially refined the above result by proving that for sufficiently large genus, 
only four involutions are required, a reduction highlighting deeper symmetries in high-genus surfaces.
This improvement not only approached conjectured minimal bounds for involution generators but also underscored the role of geometric patterns in simplifying generating sets.
Kassabov further established connections between minimal generation, torsion elements, and surface symmetries, 
laying groundwork for subsequent work on bounded generation across topological and arithmetic groups.
\begin{theorem} \cite{kassabov}  \label{thm:kassabov}
Let $\Sigma_g$ be a closed, oriented surface of genus $g\ge7$.  There exist elements
$$\rho_1,\rho_2,\rho_3,\rho_4\in\Mod(\Sigma_g)$$
of order two such that
$$\Mod(\Sigma_g)=\langle\rho_1,\rho_2,\rho_3,\rho_4\rangle.
$$
\end{theorem}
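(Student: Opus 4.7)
The plan is to refine the strategy behind Theorem~\ref{thm:brendle-farb}, absorbing two of the six involutions found by Brendle and Farb into combinations of the others, using a single rotational symmetry to transport the remaining data around the whole surface. Concretely, I would start by fixing a rotation $R$ of $\Sigma_g$ of large order (comparable to $R_g$ in the Brendle--Farb argument) and writing $R=\rho_1\rho_2$, where $\rho_1$ and $\rho_2$ are the two involutions realizing $R$ as a product of reflections across perpendicular axes in a symmetric embedding of $\Sigma_g\subset\mathbb{R}^3$. These are the first two required involutions, and conjugation by powers of $R=\rho_1\rho_2$ already lies in the subgroup generated by $\{\rho_1,\rho_2\}$.

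Next, I would use the rotation $R$ to replace the four pair-swap involutions $J_1,\dots,J_4$ of the six-involution proof by two carefully chosen involutions $\rho_3,\rho_4$. The idea is to design a single \emph{good lantern} embedded so that its relevant twists can be written as a product involving $\rho_1$ and $\rho_3$, and then to observe that the $R$-orbit $\{R^i\rho_3 R^{-i}:i\in\mathbb{Z}\}$, which sits inside $\langle\rho_1,\rho_2,\rho_3\rangle$, realises \emph{all} the pair-swap involutions used by Brendle and Farb. A second involution $\rho_4$ would then play the role of the remaining pair-swap that cannot be obtained from this single orbit, chosen so that it completes the lantern relation yielding a Dehn twist $t_\alpha$ about a nonseparating curve. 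The hypothesis $g\geq 7$ enters here: it guarantees enough room in $\Sigma_g$ both for the symmetric embedding with a rotation of sufficiently large order and for the good lantern to sit inside a fundamental domain of $R$.

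Once one Dehn twist $t_\alpha\in\langle\rho_1,\rho_2,\rho_3,\rho_4\rangle$ has been produced, the conjugates $R^{i}t_\alpha R^{-i}$ are all in the subgroup, yielding Dehn twists about an entire $R$-orbit of nonseparating curves. The final step is to check that this orbit, together with $R$ itself (or equivalently together with $\rho_1\rho_2$), contains a full Humphries-type generating set for $\Mod(\Sigma_g)$. This last verification can be done in the spirit of Korkmaz's argument for Theorem~\ref{thm:korkmaz}: one identifies a Dehn twist in the orbit that plays the role of $t_{a_2}$, and shows that the remaining Humphries twists are obtained by further conjugation by powers of a torsion element lying in $\langle\rho_1,\rho_2\rangle$.

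The main obstacle will be the simultaneous design of $\rho_3$ and $\rho_4$: they must be genuine involutions whose fixed-point sets interact compatibly with the rotational symmetry $R$, they must fit a good lantern relation together with a conjugate of $\rho_1$ so that a Dehn twist can be extracted, and the $R$-orbit of $\rho_3$ must be rich enough that a fifth involution becomes redundant. Showing redundancy in the Brendle--Farb proof was a single observation of Kassabov ($J_4=J_2J_3J_2$); here one needs a more global version of that phenomenon, exploiting the full symmetry of $\Sigma_g$ under $R$ to trade two independent pair-swaps for conjugates of a single one. Getting this bookkeeping right, while keeping the genus bound as small as $g\geq 7$, is where the real work of the proof lies.
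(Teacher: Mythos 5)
Your proposal follows the same skeleton as the paper's sketch of Kassabov's construction---two involutions $\rho_1,\rho_2$ realized as half-turn symmetries of a symmetric embedding $\Sigma_g\subset\mathbb{R}^3$ whose product is a large-order rotation $R$, two further involutions adapted to the handle structure, production of a single Dehn twist inside $\langle\rho_1,\rho_2,\rho_3,\rho_4\rangle$, and transport of that twist around the surface---but the two middle steps are carried out by genuinely different mechanisms. To produce the first twist, the paper's sketch asserts that ``the product of two reflections along intersecting axes is a Dehn twist (or its inverse)'' and obtains the Humphries twists by choosing axes appropriately; you instead run the Brendle--Farb lantern machinery of Theorem~\ref{thm:brendle-farb}, writing a twist via the lantern relation as a product of differences $t_a t_x^{-1}$, each of which is a product of two involutions once a pair-swap involution carrying $a$ to $x$ is available, and you supply those pair-swaps from the $R$-orbit of a single $\rho_3$ (correctly noting this orbit lies in $\langle\rho_1,\rho_2,\rho_3\rangle$ since $R=\rho_1\rho_2$). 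Your route is in fact the more faithful one: taken literally, the paper's claim about products of reflections is not true in general---the product of two finite-order symmetries is generically not a twist---whereas the lantern-plus-pair-swap mechanism is exactly how twists are extracted from involutions in the Brendle--Farb and Kassabov arguments. For the endgame, the paper invokes connectivity of the curve complex to get transitivity on nonseparating curves, while you propose explicit Korkmaz-style bookkeeping with a Humphries-type orbit; the paper's version is slicker, yours more concrete, and either suffices once one twist is in hand. Both you and the paper leave the genuinely hard point---the simultaneous design of $\rho_3$ and $\rho_4$ so that a single $R$-orbit of pair-swaps makes the extra Brendle--Farb involutions redundant, with the genus bound $g\ge 7$ providing the needed room---at the level of a description rather than a verification; since you explicitly flag this as where the real work lies, your sketch is no less complete than the one printed in the paper.
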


\begin{proof}
We sketch the main ideas of Kassabov’s construction:
Start with a model of $\Sigma_g$ as a symmetric embedding in $\mathbb{R}^3$, so that two reflections around perpendicular planes give involutions $\rho_1$ and $\rho_2$ acting as half-turn rotations of the surface.
Define two further involutions $\rho_3$ and $\rho_4$ by composing the rotations $\rho_1,\rho_2$ with a correspondence that swaps pairs of handles and reverses orientation locally.
These are arranged so that every Humphries curve lies in the orbit of some curve under the group generated by $\rho_1,\dots,\rho_4$.
\begin{figure}[h]
\begin{center}
\scalebox{0.35}{\includegraphics{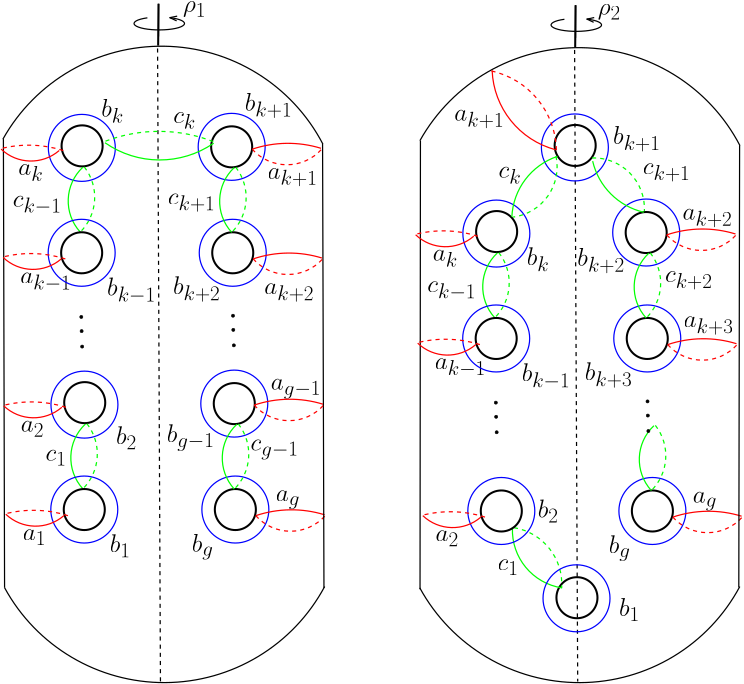}}
\caption{The rotations $\rho_1$ and $\rho_2$ in Kassabov's generating set}
\label{kassabov_fig}
\end{center}
\end{figure}
Observe that the product of two reflections along intersecting axes is a Dehn twist (or its inverse) about the corresponding curve.
By choosing the axes appropriately, one shows that the four involutions generate all Humphries twists.
Since the curve complex $\mathcal{C}(\Sigma_g)$ is connected for $g\ge7$, the subgroup generated by $\{\rho_i\}$ acts transitively on nonseparating curves.
Hence once one twist is generated, all are.
Combining these steps yields the desired four-involution generating set.
\end{proof}

Kassabov's work established that four involutions suffice for generating $\mathrm{Mod}(\Sigma_g)$ when the genus is large enough.
This naturally led to the question of whether this number could be reduced further.
It is known that $\mathrm{Mod}(\Sigma_g)$ cannot be generated by two involutions because it contains non-abelian free groups.
Thus, the theoretical minimum number of involution generators is three.
Korkmaz provided the definitive answer to this question, achieving this minimal bound and proving that three involutions are indeed sufficient for surfaces of sufficiently high genus.
\begin{theorem} \cite{korkmaz2020}\label{thm:korkmaz-involution} 
If $g \geq 8$, the mapping class group $\text{Mod}(\Sigma_g)$ is generated by three involutions.
\end{theorem}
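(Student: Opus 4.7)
The plan is to reduce the problem to Theorem~\ref{thm:korkmaz}, which already asserts that $\Mod(\Sigma_g)$ is generated by the pair $\{t_{a_2}, S\}$ with $S$ the rotational torsion element of order $4g+2$. It therefore suffices to exhibit three involutions $\rho_1,\rho_2,\rho_3 \in \Mod(\Sigma_g)$ for which the subgroup $G := \langle \rho_1,\rho_2,\rho_3\rangle$ contains both $S$ and $t_{a_2}$: by Theorem~\ref{thm:korkmaz}, this forces $G = \Mod(\Sigma_g)$. The lower bound of three is automatic, since $\Mod(\Sigma_g)$ contains non-abelian free subgroups and so cannot be generated by two involutions (whose product always generates a dihedral group).

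For the first two involutions, I would realize $\Sigma_g$ as a symmetric embedding in $\mathbb{R}^3$ along the axis of $S$, and take $\rho_1$ to be a half-turn rotation (for example, the hyperelliptic involution, or a reflection conjugate to it) whose axis is transverse to that of $S$ so that $\rho_1 S \rho_1 = S^{-1}$. Then $\rho_2 := \rho_1 S$ is automatically an involution, since $\rho_2^2 = \rho_1 S \rho_1 S = S^{-1} S = 1$, and $\rho_1\rho_2 = S$. This guarantees $S \in G$, and with $S \in G$ all conjugates by powers of $S$ of any further element of $G$ — in particular, the full $S$-orbit of any Dehn twist curve produced later — lie in $G$.

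For the third involution, I would introduce $\rho_3$ whose fixed-curve data meets that of $\rho_1$ in a controlled pattern, so that a short word in $\rho_1,\rho_2,\rho_3$ evaluates to a Dehn twist about a single nonseparating curve in the $S$-orbit of $a_2$. The underlying mechanism is the standard one used in Theorems~\ref{thm:brendle-farb} and~\ref{thm:kassabov}: two involutions whose axes meet transversely along a single simple closed curve have product (up to a central correction) a Dehn twist about that curve. Because $S$ cyclically permutes the Humphries curves $a_1,\dots,a_{2g+1}$, conjugating this one twist by powers of $S \in G$ immediately recovers $t_{a_2}$, completing the reduction to Theorem~\ref{thm:korkmaz}.

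The main obstacle is the explicit geometric placement of $\rho_3$: it must be an \emph{actual} involution of $\Sigma_g$, meaning its fixed-point set and quotient genus must be consistent with a branched cover structure on $\Sigma_g$, and simultaneously its axis must intersect the axes of $\rho_1$ and $\rho_2$ so that a specific short word in the three involutions produces a single Dehn twist rather than a more complicated product of twists that would require further algebraic untangling. The hypothesis $g \geq 8$ is precisely what provides enough handles both to accommodate a third symmetric involution compatible with the $S$-symmetry and to ensure that the $S$-orbit of the produced twist curve sweeps out a full Humphries chain; in lower genus, the smaller order $4g+2$ of $S$ and the tighter combinatorics of fixed-point sets obstruct such a simultaneous realization.
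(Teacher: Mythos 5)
Your reduction collapses at its very first step: there is no involution $\rho_1 \in \Mod(\Sigma_g)$ with $\rho_1 S \rho_1 = S^{-1}$ for the order-$(4g+2)$ element $S$ of Theorem~\ref{thm:korkmaz}, so $\rho_2 := \rho_1 S$ cannot be formed. Indeed, if such a $\rho_1$ existed, then $\langle S, \rho_1 \rangle$ would be a finite dihedral subgroup of $\Mod(\Sigma_g)$; by Kerckhoff's Nielsen realization theorem it could be realized by isometries of a hyperbolic structure, and $\rho_1$ would descend to an orientation-preserving involution of the quotient orbifold $\Sigma_g / \langle S \rangle$, which is a sphere with three cone points of the pairwise \emph{distinct} orders $2$, $2g+1$, $4g+2$. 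Such an involution must fix each cone point separately (the orders are distinct), yet a nontrivial orientation-preserving finite-order homeomorphism of $S^2$ fixes exactly two points; hence $\rho_1$ would lie in $\langle S \rangle$, forcing $S = S^{-1}$, a contradiction. (Your suggested example fails concretely as well: the hyperelliptic involution is $S^{2g+1}$, a power of $S$, so it commutes with $S$ rather than inverting it.) This obstruction is precisely why Korkmaz's proof does \emph{not} route through the order-$(4g+2)$ generator of Theorem~\ref{thm:korkmaz}: instead it uses the rotation $R = \rho_1\rho_2$, which \emph{is} a product of two $\pi$-rotations of the standardly embedded surface.

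Your third step fails for a related reason: a Dehn twist about an essential simple closed curve is never a product of two involutions, and no "central correction" is available since the hyperelliptic involution is central only for $g \le 2$. If $t_c = \sigma_1\sigma_2$ with $\sigma_i^2 = 1$, then $\sigma_1 t_c \sigma_1 = \sigma_2\sigma_1 = t_c^{-1}$, i.e.\ $t_{\sigma_1(c)} = t_c^{-1}$, which is impossible for orientation-preserving $\sigma_1$. What involutions produce cheaply are \emph{differences} of twists $t_x t_y^{-1}$, via the key lemma that $\rho\, t_x t_y^{-1}$ is an involution whenever $\rho$ is an involution carrying $x$ to $y$; converting such differences into honest twists is the entire technical difficulty, not a finishing touch. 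Korkmaz's actual argument proceeds in two stages: first, an intricate lantern-relation computation showing that $\{\rho_1, \rho_2, F_1\}$ generates $\Mod(\Sigma_g)$, where $F_1 = t_{b_1}t_{a_2}t_{c_3}t_{c_4}^{-1}t_{a_6}^{-1}t_{b_7}^{-1}$ is a product of six twists engineered so that $\rho_3 = R^2\rho_1 R^{-2}$ conjugates $t_{b_1}t_{a_2}t_{c_3}$ to $t_{b_7}t_{a_6}t_{c_4}$; second, the replacement of $F_1$ by the involution $I_3 = \rho_3 F_1$, using the lemma above. The hypothesis $g \ge 8$ enters exactly in fitting these six curves and the conjugating rotation on the surface --- the part your proposal defers as "explicit geometric placement" --- so the proof as proposed does not go through.
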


Korkmaz also notes that $\text{Mod}(\Sigma_g)$ is generated by four involutions for $g \geq 3$.
Thus, for $g \geq 8$, three is the minimal number of involutions required to generate $\text{Mod}(\Sigma_g)$.
This result is significant because it establishes the precise minimal number of involution generators for mapping class groups of surfaces with genus $g \geq 8$.
It answers a question posed by Luo \cite{luo}  regarding whether a universal upper bound exists for the number of involution elements needed to generate $\text{Mod}(\Sigma_g)$.
Previous results had shown generation by six involutions\cite{brendle-farb} and later four involutions for $g \geq 7$ \cite{kassabov}.
Brendle and Farb also posed the problem of finding a generating set of size three for $\text{Mod}(\Sigma_g)$ \cite{brendle-farb}.
Korkmaz's paper improves upon these by reducing the number of required involutions to three for a large class of surfaces \cite{korkmaz2020}.
The generation of $\text{Mod}(\Sigma_g)$ also has implications for the symplectic group $\text{Sp}(2g, \mathbb{Z})$, as there is a surjective homomorphism from $\text{Mod}(\Sigma_g)$ onto $\text{Sp}(2g, \mathbb{Z})$.
Thus, a corollary is that $\text{Sp}(2g, \mathbb{Z})$ is also generated by three involutions for $g \geq 8$.
\text{Sketch of the Proof (for $g \ge 8$)}:
The proof is constructive but follows an indirect strategy.
Instead of showing that the three involutions can directly produce a standard set of generators, the approach is to first establish a new, intermediate generating set for the mapping class group and then demonstrate that this new set can be generated by the three chosen involutions.
The argument has two main stages:

\begin{enumerate}
    \item \text{A New Generating Set.} The core of the proof first establishes that for $g \ge 8$, the mapping class group $\Mod(\Sigma_g)$ is generated by the following three elements:
    \begin{itemize}
        \item $\rho_1$, an involution given by rotation of the surface by $\pi$ about the $z$-axis.
        \item $\rho_2$, an involution given by rotation by $\pi$ about a specific line in the $yz$-plane.
        \item $F_1 = t_{b_{1}}t_{a_{2}}t_{c_{3}}t_{c_{4}}^{-1}t_{a_{6}}^{-1}t_{b_{7}}^{-1}$, a specific product of six Dehn twists, where the letters correspond to twists about the curves shown in the paper's figures (see also Figure~\ref{NG}).
    \end{itemize}
     The proof of this result is highly technical, relying on a series of intricate calculations  and the lantern relation to show that this set generates products of opposite Dehn twists (e.g., $t_{a_1}t_{a_2}^{-1}$) and a rotation $R=\rho_1\rho_2$, which are themselves shown to form a generating set.
    \begin{figure}[h]
\begin{center}
\scalebox{0.25}{\includegraphics{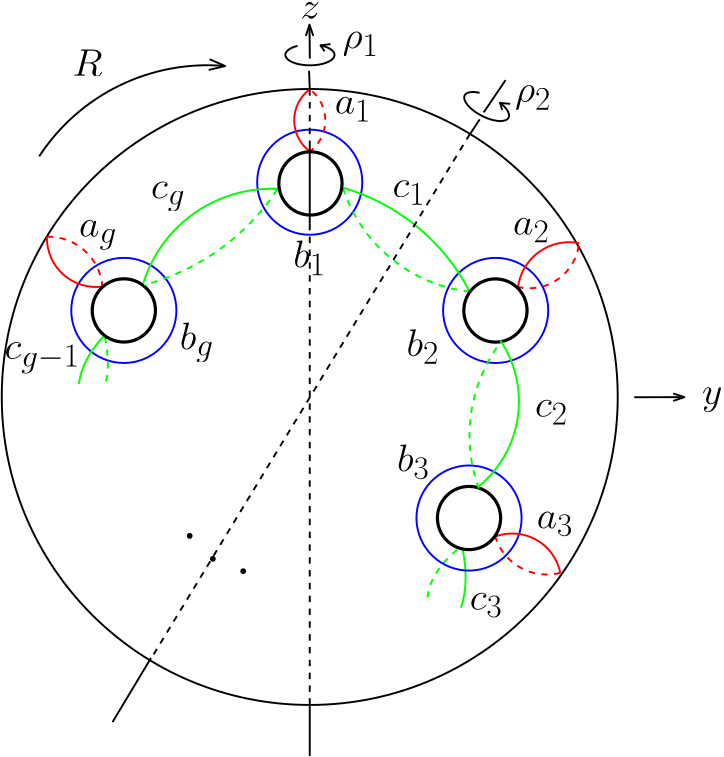}}
\caption{The involutions $\rho_1$ and $\rho_2$  on the surface $\Sigma_g$.}
\label{NG}
\end{center}
\end{figure}

    \item \text{Constructing the Three Involution Generators.} The second stage demonstrates that the generating set $\{\rho_1, \rho_2, F_1\}$ can be replaced by a set of three involutions, $\{I_1 = \rho_1, I_2 = \rho_2, I_3 = \rho_3 F_1\}$ where $\rho_3$ is another geometric involution defined as $\rho_3 = R^2\rho_1R^{-2}$.
It is shown that $I_3$ is indeed an involution by leveraging a key lemma, which states that if $\rho$ is an involution mapping an element $x$ to $y$, then $\rho xy^{-1}$ is also an involution.
In this case, $\rho_3$ conjugates the element $t_{b_1}t_{a_2}t_{c_3}$ to $t_{b_7}t_{a_6}t_{c_4}$, which guarantees that $I_3 = \rho_3 (t_{b_1}t_{a_2}t_{c_3})(t_{b_7}t_{a_6}t_{c_4})^{-1}$ is an involution.
The subgroup generated by $\{I_1, I_2, I_3\}$ is the same as the one generated by $\{\rho_1, \rho_2, F_1\}$.
This is because $I_1$ and $I_2$ are $\rho_1$ and $\rho_2$, and from them one can construct the rotation $R=\rho_1\rho_2$ and subsequently the involution $\rho_3$.
Once $\rho_3$ is obtained, the generator $F_1$ can be recovered from $I_3$ via the product $F_1 = \rho_3I_3$.
Therefore, the three involutions generate $\Mod(\Sigma_g)$ for $g \ge 8$.
\end{enumerate}

Building on Korkmaz’s work, Yildiz \cite{yildiz2020} improved the genus bound to $g \ge 6$ using analogous techniques.
\begin{question}
Can the mapping class group be generated by three involutions for $3 \le g \le 5$?
This remains an open and compelling question.
\end{question}
\section{Surfaces with Punctures}

The introduction of punctures to a surface\index{surface!punctured} significantly alters the structure of its mapping class group.
A key distinction arises between homeomorphisms that permute punctures and those that fix them pointwise.
This leads to the definition of two closely related groups: the (full) mapping class group, which allows puncture permutations, and the pure mapping class group, which does not.
Let $\Sigma_{g,p}$ be a connected orientable surface of genus $g$ with a set $P = \{z_1, \dots, z_p\}$ of $p \geq 1$ marked points, or punctures.
The \textbf{ mapping class group}, denoted $\text{Mod}(\Sigma_{g,p})$, is the group of isotopy classes of homeomorphisms $f: \Sigma_{g,p} \to \Sigma_{g,p}$ that may permute the punctures in $P$.
The \textbf{pure mapping class group}\index{pure mapping class group}\index{mapping class group!pure}, $\text{PMod}(\Sigma_{g, p})$, is the subgroup of $\text{Mod}(\Sigma_{g,p})$ consisting of elements that fix each puncture in $P$ individually.
The relationship between these two groups is described by the following short exact sequence:
$$1 \to \text{PMod}(\Sigma_{g,p}) \hookrightarrow \text{Mod}(\Sigma_{g,p}) \xrightarrow{\pi} S_p \to 1$$
where $S_p$ is the symmetric group on $p$ letters and the surjection $\pi$ records the permutation induced by a mapping class on the set of punctures.
This sequence shows that $\text{PMod}(\Sigma_{g,p})$ is a normal subgroup of $\text{Mod}(\Sigma_{g, p})$ of index $p!$.
\subsection{The Birman Exact Sequence and Point-Pushing Maps} \label{sec:birman}

A powerful tool for understanding generators of $\mathrm{PMod}(\Sigma_{g, p})$ is the Birman exact sequence\index{Birman exact sequence} (also known as the Birman--Hilden exact sequence or forgetful map sequence).
For $p \geq 1$, let $p_i$ be one of the punctures.
There is a forgetful homomorphism $F: \mathrm{PMod}(\Sigma_{g, p}) \to \mathrm{PMod}(\Sigma_{g, p-1})$ obtained by simply forgetting the puncture $p_i$.
This map is surjective.

\begin{theorem}[Birman Exact Sequence]
For $g \geq 0$ and $p \geq 1$, there is a short exact sequence:
$$ 
1 \to \pi_1 (\Sigma_{g, p-1}, p_i) \xrightarrow{\text{push}} \mathrm{PMod}(\Sigma_{g, p}) \xrightarrow{F} \mathrm{PMod}(\Sigma_{g, p-1})  \to 1 
$$
Here, $\Sigma_{g, p-1}$ is the surface $\Sigma_{g, p}$ with the puncture $p_i$ filled in or ignored, and $\pi_1 (\Sigma_{g, p-1}, p_i)$ is its fundamental group based at the location of the forgotten puncture $p_i$.
\end{theorem}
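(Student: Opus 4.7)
The plan is to construct both maps explicitly, verify exactness at the middle term by a direct isotopy argument, and then obtain injectivity of the point-push map from the long exact sequence of homotopy groups of an evaluation fibration.

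First I would define the two homomorphisms. For $F$, given a representative $\phi$ of a class in $\mathrm{PMod}(\Sigma_{g,p})$, one simply forgets that $p_i$ is marked; since $\phi$ fixes $p_i$ pointwise, it descends to a homeomorphism of $\Sigma_{g,p-1}$, and isotopies respect this operation. For the point-push map, given a loop $\gamma\colon[0,1]\to\Sigma_{g,p-1}$ based at $p_i$ and transverse to the other punctures, apply the isotopy extension theorem to produce an ambient isotopy $\{\Phi_t\}_{t\in[0,1]}$ of $\Sigma_{g,p-1}$ with $\Phi_0=\mathrm{id}$, fixing every other puncture pointwise, and satisfying $\Phi_t(p_i)=\gamma(t)$. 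Since $\gamma$ is a loop, $\Phi_1$ fixes $p_i$ and yields a class $\mathrm{push}([\gamma])\in\mathrm{PMod}(\Sigma_{g,p})$; a standard homotopy-lifting argument shows this class depends only on $[\gamma]\in\pi_1(\Sigma_{g,p-1},p_i)$.

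Next I would verify exactness at the two non-trivial spots. Surjectivity of $F$ follows from homogeneity of the surface: given any $[\psi]\in\mathrm{PMod}(\Sigma_{g,p-1})$, choose a representative $\psi$ and precompose by an isotopy moving $\psi(p_i)$ back to $p_i$ through the complement of the other punctures, producing a lift in $\mathrm{PMod}(\Sigma_{g,p})$. The inclusion $\mathrm{image}(\mathrm{push})\subseteq\ker F$ is immediate, as $\{\Phi_t\}$ itself witnesses $\Phi_1$ being isotopic to the identity after forgetting the mark on $p_i$. For the reverse inclusion, if $\phi$ represents a class in $\ker F$, pick an isotopy $\phi_t$ from the identity to $\phi$ on $\Sigma_{g,p-1}$; then the track $t\mapsto\phi_t(p_i)$ is a loop $\gamma$ based at $p_i$, and a direct comparison of isotopies shows $[\phi]=\mathrm{push}([\gamma])$.

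The main obstacle, and the subtle step, will be injectivity of the point-push map: I need to show that if $\mathrm{push}([\gamma])$ is trivial in $\mathrm{PMod}(\Sigma_{g,p})$, then $\gamma$ is null-homotopic in $\Sigma_{g,p-1}$. The standard route is through the evaluation fibration
$$\mathrm{Homeo}_{0}^{+}(\Sigma_{g,p})\hookrightarrow \mathrm{Homeo}_{0}^{+}(\Sigma_{g,p-1})\xrightarrow{\mathrm{ev}_{p_i}} \Sigma_{g,p-1},$$
where the middle group consists of homeomorphisms isotopic to the identity and fixing the other $p-1$ punctures pointwise. Its long exact sequence of homotopy groups contains the fragment
$$\pi_{1}\bigl(\mathrm{Homeo}_{0}^{+}(\Sigma_{g,p-1})\bigr)\longrightarrow \pi_{1}(\Sigma_{g,p-1},p_i)\xrightarrow{\partial} \pi_{0}\bigl(\mathrm{Homeo}^{+}(\Sigma_{g,p})\bigr)\longrightarrow \pi_{0}\bigl(\mathrm{Homeo}_{0}^{+}(\Sigma_{g,p-1})\bigr),$$
and the connecting homomorphism $\partial$ coincides with the point-push map by construction. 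Injectivity of push thus reduces to the vanishing of $\pi_{1}(\mathrm{Homeo}_{0}^{+}(\Sigma_{g,p-1}))$, which is the classical theorem of Hamstrom and Earle--Eells asserting contractibility of $\mathrm{Homeo}_{0}^{+}$ whenever $\chi(\Sigma_{g,p-1})<0$. The low-complexity cases (the sphere with few punctures and the once-punctured torus) would require separate direct analysis, since there the identity component of the homeomorphism group carries nontrivial $\pi_1$ and the exact sequence must be adjusted accordingly.
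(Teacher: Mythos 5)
The paper offers no proof of this statement: it is quoted as classical background (it is Birman's theorem, Theorem 4.6 in Farb--Margalit), and the surrounding text only explains the point-pushing maps and how the sequence is used inductively to build generating sets. So there is no paper proof to compare against; judged on its own, your argument is essentially the standard proof and is correct in outline. The forgetful map, the isotopy-extension construction of $\mathrm{push}$, the track-of-the-isotopy argument showing $\ker F \subseteq \mathrm{image}(\mathrm{push})$ (via the comparison $\Phi_t^{-1}\phi_t$, which fixes $p_i$ for all $t$), and the reduction of injectivity to the vanishing of $\pi_1$ of the identity component of the homeomorphism group via the evaluation fibration and the Hamstrom/Earle--Eells contractibility theorem are exactly the classical route. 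One notational slip: in your fibration the fiber is mislabeled. The fiber of $\mathrm{ev}_{p_i}$ over $p_i$ inside $\mathrm{Homeo}_0^{+}(\Sigma_{g,p-1})$ consists of homeomorphisms fixing all $p$ punctures that become isotopic to the identity after forgetting $p_i$; it is not $\mathrm{Homeo}_0^{+}(\Sigma_{g,p})$, whose $\pi_0$ would be trivial and would make your long exact sequence fragment meaningless. The clean setup takes as total space the full group $\mathrm{Homeo}^{+}$ of $\Sigma_{g,p-1}$ fixing the other $p-1$ punctures, with fiber $\mathrm{Homeo}^{+}(\Sigma_{g,p})$; then the $\pi_0$ terms are the two mapping class groups, matching the fragment you wrote down.

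Your closing caveat deserves emphasis, because it exposes an inaccuracy in the paper's statement itself: the hypothesis ``$g \geq 0$ and $p \geq 1$'' is too generous. When $\chi(\Sigma_{g,p-1}) \geq 0$ the low-complexity cases do not merely ``require separate direct analysis'' --- the sequence is genuinely false there. For $(g,p)=(1,1)$ the push map $\mathbb{Z}^2 \to \Mod(\Sigma_{1,1}) \cong \mathrm{SL}(2,\mathbb{Z})$ is trivial, since pushing a point around the torus extends to a translation isotopic to the identity; for $(g,p)=(0,3)$ the claimed sequence would read $1 \to \mathbb{Z} \to 1 \to 1 \to 1$, as $\mathrm{PMod}(\Sigma_{0,3})$ is trivial while $\pi_1(\Sigma_{0,2}) \cong \mathbb{Z}$. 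In both cases injectivity fails precisely because $\pi_1(\mathrm{Homeo}_0^{+}(\Sigma_{g,p-1}))$ is nontrivial, which is exactly where your proof consumes the hypothesis; the correct condition is $\chi(\Sigma_{g,p-1}) < 0$. So your argument, once the fiber is relabeled and the hypothesis sharpened, is a complete and correct proof of the properly stated theorem, and it identifies the needed restriction that the paper's formulation omits.
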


The kernel of $F$ consists of mapping classes that become trivial when $p_i$ is forgotten.
These are precisely the \textbf{point-pushing maps}\index{point-pushing map} (or push maps).
An element $\alpha \in \pi_1 (\Sigma_{g, p-1}, p_i)$, represented by a loop $\alpha$ based at $p_i$, gives rise to a mapping class $\text{push}(\alpha) \in \mathrm{PMod}(\Sigma_{g, p}$ by dragging the puncture $p_i$ around the loop $\alpha$.

Given a simple closed curve $\alpha$ on $\Sigma_{g,p-1}$ passing through the puncture $p_n$, let $A$ be an annular neighborhood of $\alpha$ in $\Sigma_{g, p-1}$. When $p_i$ is considered a puncture within this annulus, the map $\text{push}(\alpha)$ can be expressed as a product of Dehn twists. Specifically, if $c_1$ and $c_2$ are the boundary curves of the annulus 
$A$ (where $p_n$ lies between $c_1$ and $c_2$), then 
$\text{push}(\alpha) = c_1 c_2^{-1}$. These curves $c_1, c_2$ are essential in $\Sigma_{g, p}$ and encircle $p_i$ along with parts of $\alpha$.

The Birman exact sequence allows for an inductive approach to finding generators for $\mathrm{PMod}(\Sigma_{g, p})$:
\begin{enumerate}
    \item Start with known generators for $\mathrm{PMod}(\Sigma_{g, 0}) = \Mod(\Sigma_g)$ (if $i=0$) or $\mathrm{PMod}(\Sigma_{0, p})$ (if $g=0$).
    \item To get generators for $\mathrm{PMod}(\Sigma_{g, p})$ from those of $\mathrm{PMod}(\Sigma_{g, p-1})$, one needs to add lifts of the generators of $\mathrm{PMod}(\Sigma_{g, p-1})$ to $\mathrm{PMod}(\Sigma_{g, p})$ (which can often be chosen to be the same Dehn twists on curves disjoint from $p_n$) plus a set of generators for the kernel $\pi_1(\Sigma_{g, p-1}, p_i)$.
\end{enumerate}
Since $\pi_1(\Sigma_{g, p-1})$ is finitely generated by loops, the corresponding point-pushing maps provide the additional Dehn twist generators needed.
\subsection{Generators for the Full Mapping Class Group}

To generate the full group $\text{Mod}(\Sigma_{g,p})$, one needs elements that can permute the punctures.
Dehn twists alone cannot do this, as they are defined by homeomorphisms supported in an annulus, which can be chosen to be disjoint from the punctures.
The necessary generators are half-twists.

\begin{definition}
Let $\alpha$ be an arc in $\Sigma_{g,p}$ connecting two distinct punctures, say $z_i$ and $z_{j}$.
Let $N$ be a regular neighborhood of $\alpha \cup \{z_i, z_j\}$, which is a disk with two punctures.
The half-twist\index{half-twist} about $\alpha$, denoted $H_{\alpha}$, is the isotopy class of a homeomorphism supported on $N$ that swaps the punctures $z_i$ and $z_j$ by rotating the disk $N$ by an angle of $\pi$.
The square of a half-twist, $H_\alpha^2$, is a Dehn twist about the boundary curve $\partial N$.
\end{definition}

\begin{figure}[h!]
% Placeholder for figure showing a half-twist
\centering
\includegraphics[width=0.55\textwidth]{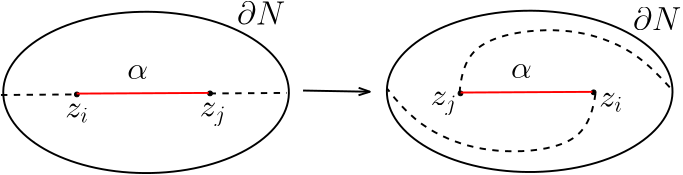}
\caption{A half-twist $H_{\alpha}$ about an arc $\alpha$ connecting the punctures $z_i$ and $z_j$.
The twist swaps the two punctures. Its square, $H_\alpha^2$, is a Dehn twist about the curve $\partial N$.}
\label{fig:half-twist}
\end{figure}

The half-twists generate the symmetric group action on the punctures.
A standard choice is to use the elementary half-twists $H_i$ about arcs connecting consecutive punctures $z_i$ and $z_{i+1}$ for $i = 1, \dots, p-1$.
These generate $S_p$.

\begin{theorem}\cite{farb-margalit}\label{thm:punctured generators}
For $g \ge 0$ and $p \ge 1$, the mapping class group $\Mod(\Sigma_{g,p})$ is generated by a finite number of Dehn twists and elementary half-twists $H_1, \dots, H_{p-1}$.
\end{theorem}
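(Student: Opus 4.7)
The proof plan is to combine two structural facts: the short exact sequence
\[
1 \to \mathrm{PMod}(\Sigma_{g,p}) \to \Mod(\Sigma_{g,p}) \xrightarrow{\pi} S_p \to 1
\]
and the Birman exact sequence, so as to reduce the problem to the already-known finite generation of $\Mod(\Sigma_g)$ by Dehn twists (Humphries, together with the closed/one-puncture case). First, I would observe that under $\pi$, each elementary half-twist $H_i$ maps to the transposition $(i, i{+}1)\in S_p$, since $H_i$ swaps the consecutive punctures $z_i$ and $z_{i+1}$ and fixes the rest. Because adjacent transpositions generate $S_p$, this immediately shows that $\pi(\langle H_1,\dots,H_{p-1}\rangle) = S_p$. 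Hence, given any mapping class $\varphi\in\Mod(\Sigma_{g,p})$, there is a word $w$ in the $H_i$ whose image under $\pi$ equals $\pi(\varphi)$, so $w^{-1}\varphi \in \mathrm{PMod}(\Sigma_{g,p})$. Thus the theorem reduces to producing a finite Dehn-twist generating set for the \emph{pure} mapping class group.

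Next, I would prove the following claim by induction on $p\ge 0$: $\mathrm{PMod}(\Sigma_{g,p})$ is generated by finitely many Dehn twists about simple closed curves. The base case $p=0$ is $\mathrm{PMod}(\Sigma_g)=\Mod(\Sigma_g)$, handled by Humphries' theorem for $g\ge 1$, and by triviality for $g=0$. For the inductive step, I would apply the Birman exact sequence of Section~\ref{sec:birman} with the puncture $p_i$ forgotten:
\[
1\to\pi_1(\Sigma_{g,p-1},p_i)\xrightarrow{\text{push}} \mathrm{PMod}(\Sigma_{g,p})\xrightarrow{F}\mathrm{PMod}(\Sigma_{g,p-1})\to 1.
\]
By induction, $\mathrm{PMod}(\Sigma_{g,p-1})$ admits a finite Dehn-twist generating set; each such twist is along a curve that can be isotoped off the forgotten puncture, hence lifts to a Dehn twist in $\mathrm{PMod}(\Sigma_{g,p})$. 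To generate the kernel, I would use that $\pi_1(\Sigma_{g,p-1},p_i)$ is finitely generated (by $2g+(p-1)$ loops when $p-1\ge 1$, or $2g$ loops when $p=1$), and invoke the explicit formula from the excerpt: for a loop $\alpha$ based at $p_i$, the point-push $\text{push}(\alpha)$ equals $t_{c_1}t_{c_2}^{-1}$ for the two boundary curves of a regular annular neighborhood of $\alpha$. This rewrites each generator of the kernel as a product of two Dehn twists, completing the induction.

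Combining these two steps gives the theorem: any $\varphi\in\Mod(\Sigma_{g,p})$ factors as $\varphi = w\cdot\psi$ with $w$ a word in $H_1,\dots,H_{p-1}$ and $\psi\in\mathrm{PMod}(\Sigma_{g,p})$ expressible in the finite set of Dehn twists produced by the induction.

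The main obstacle, and the step requiring the most care, is the inductive step of expressing point-pushing maps as products of Dehn twists. One has to check that the loops $\alpha$ generating $\pi_1(\Sigma_{g,p-1},p_i)$ can be represented by \emph{simple} loops whose regular neighborhoods are genuine annuli with two essential boundary components in $\Sigma_{g,p}$; loops with self-intersections or loops that bound disks enclosing only the puncture $p_i$ require a separate argument, and the decomposition $\text{push}(\alpha)=t_{c_1}t_{c_2}^{-1}$ must be verified on a standard generating set of simple loops rather than on arbitrary elements of the fundamental group. Once this is handled on a chosen finite generating set of $\pi_1(\Sigma_{g,p-1},p_i)$, the rest of the argument is formal from the exact sequences.
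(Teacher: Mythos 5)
Your proposal is correct and takes essentially the same route the paper lays out: use the elementary half-twists $H_1,\dots,H_{p-1}$ to surject onto $S_p$ via the sequence $1 \to \mathrm{PMod}(\Sigma_{g,p}) \to \Mod(\Sigma_{g,p}) \to S_p \to 1$, then generate $\mathrm{PMod}(\Sigma_{g,p})$ by induction on $p$ through the Birman exact sequence, lifting Dehn-twist generators past the forgotten puncture and writing the point-pushing maps along simple loops as products $t_{c_1}t_{c_2}^{-1}$ of two Dehn twists. Your closing caution---that the $\pi_1$-generators must be represented by simple loops and that degenerate cases (inessential boundary curves of the annular neighborhood) need separate handling---is precisely the care the standard argument takes, so nothing is missing.
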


\subsection{Generators for the Pure Mapping Class Group}

Generating the pure mapping class group $\text{PMod}(\Sigma_{g,p})$ requires understanding how to generate transformations that fix the $p$ punctures.
Unlike the full mapping class group, Dehn twists alone are often sufficient, provided that they are chosen to properly interact with the topology created by the punctures.
The generating sets differ based on whether there is a single puncture or multiple punctures.
\subsubsection{The Case of a Single Puncture}
When $p=1$, any homeomorphism must fix the single puncture, so the distinction between the full and pure mapping class groups vanishes: 
$$\Mod(\Sigma_{g,1}) = \text{PMod}(\Sigma_{g,1}).$$ 
The structure in this case is analogous to that of a surface with one boundary component.
A foundational result, building on Johnson's work on the Torelli group, provides a remarkably simple generating set.
\begin{theorem} \cite{johnson}
\label{thm:johnson}
Let $\Sigma_{g,1}$ be an orientable surface of genus $g \ge 1$ with one puncture.
The mapping class group $\Mod(\Sigma_{g,1})$ is generated by the $2g+1$ Dehn twists in the standard Humphries generating set for the closed surface $\Mod(\Sigma_g)$.
\end{theorem}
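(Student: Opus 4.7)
The natural tool is the Birman exact sequence from Section~\ref{sec:birman}, which for $p=1$ reads
\[
1 \to \pi_1(\Sigma_g, z) \xrightarrow{\mathrm{push}} \Mod(\Sigma_{g,1}) \xrightarrow{F} \Mod(\Sigma_g) \to 1.
\]
I would begin by isotoping each of the $2g+1$ Humphries curves so that it misses the marked point $z$, so that the twists $t_{a_i},t_{b_i},t_{c_j}$ lift canonically to $\Mod(\Sigma_{g,1})$, and let $G$ denote the subgroup of $\Mod(\Sigma_{g,1})$ generated by these lifts. The map $F$ sends the generators of $G$ to the classical Humphries generating set, so Humphries' theorem immediately yields $F(G)=\Mod(\Sigma_g)$. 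A standard diagram chase then reduces the theorem to verifying the kernel containment $\mathrm{push}(\pi_1(\Sigma_g,z))\subset G$.

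The main leverage for the kernel step is the equivariance identity
\[
h\cdot \mathrm{push}(\gamma)\cdot h^{-1} = \mathrm{push}(\bar h_\ast\gamma),\qquad h\in G,\ \bar h=F(h),\ \gamma\in\pi_1(\Sigma_g,z),
\]
which holds because every element of $\Mod(\Sigma_{g,1})$ fixes $z$ and therefore acts directly on $\pi_1(\Sigma_g,z)$. Since $\Mod(\Sigma_g)$ acts transitively on free-homotopy classes of non-separating simple loops, and since $\mathrm{push}$ is a group homomorphism (so commutators of push maps are push maps of commutators), it suffices to exhibit one non-trivial element of $\ker F$ inside $G$: its $G$-conjugates then cover push maps along non-separating loops realizing all of the $a_i$ and $b_i$ classes, and commutators fill out the remainder of $\pi_1(\Sigma_g,z)$.

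The step I expect to be the main obstacle is producing this seed push map explicitly within $G$. The strategy I would follow is to factor a single Dehn twist $t_\alpha$ about an auxiliary non-Humphries curve $\alpha$ in two different ways, $w_+$ and $w_-$, as words in Humphries twists---typically via lantern or chain relations on subsurfaces placed so that the puncture $z$ lies between two parallel pushoffs of $\alpha$. Both $w_\pm$ lie in $G$ and both project under $F$ to $t_\alpha$, so $w_+ w_-^{-1}\in G\cap\ker F$ is precisely the push map $t_{\alpha_+}t_{\alpha_-}^{-1}$ around the non-separating loop through $z$ detected by the mismatch between the two factorizations. Verifying that these two factorizations can genuinely be realized using Humphries twists alone, and that the resulting push map is non-trivial, is the delicate geometric part of the argument; once it is in hand, the equivariance propagation above closes the proof. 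Finally, since $p=1$ admits no puncture permutation, we have $\Mod(\Sigma_{g,1})=\mathrm{PMod}(\Sigma_{g,1})$ and no half-twist generator enters the statement, consistent with the framework set up in the preceding subsections.
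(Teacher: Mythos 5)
Your overall framework---lift the Humphries twists along the Birman exact sequence and reduce to the kernel containment $\mathrm{push}(\pi_1(\Sigma_g,z))\subset G$---is the standard route, and it matches the inductive scheme the paper itself sketches in Section~\ref{sec:birman}; note, however, that the paper offers no proof of Theorem~\ref{thm:johnson} at all, citing it to Johnson \cite{johnson}, so your proposal must be judged on its own terms. Judged so, the propagation step has a genuine gap. The equivariance identity $h\cdot\mathrm{push}(\gamma)\cdot h^{-1}=\mathrm{push}(h_*\gamma)$ is correct, but the automorphism $h_*$ of $\pi_1(\Sigma_g,z)$ depends on $h$ itself, not only on $\bar h=F(h)$: two lifts of the same closed mapping class differ by a point push, which changes $h_*$ by an inner automorphism. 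Since you only know $F(G)=\Mod(\Sigma_g)$, conjugating your seed by elements of $G$ yields, for each simple nonseparating based class $\delta$, an element $\mathrm{push}(w\,\delta\,w^{-1})$ with an \emph{uncontrolled} conjugator $w\in\pi_1(\Sigma_g,z)$---transitivity of $\Mod(\Sigma_g)$ on free homotopy classes does not let you pin down the based class. A subgroup containing conjugates $w_ia_iw_i^{-1}$, $v_ib_iv_i^{-1}$ of a generating set need not be the whole group (already in the free group $\langle a,b\rangle$, the subgroup $\langle a,\,bab^{-1}\rangle$ is proper), and the appeal to ``commutators filling out the remainder'' does not repair this: had you genuine pushes of the based generators $a_i,b_i$, no commutators would be needed at all, since $\mathrm{push}$ is a homomorphism; with only unknown conjugates, the commutators you can form do not obviously recover $\pi_1$.

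Two further soft spots. First, the seed itself is only conjectured: you never exhibit the two Humphries-word factorizations $w_\pm$ of $t_\alpha$, and since Humphries generation of $\Mod(\Sigma_{g,1})$ is precisely the statement being proved, one must check that the relations invoked (lantern, chain) can be run entirely in subsurfaces avoiding the puncture with both pushoffs $\alpha_\pm$ realized---this is where essentially all of the content lives, and you have correctly flagged but not discharged it. The standard repair, consistent with the framework of Section~\ref{sec:birman}, avoids the one-seed-plus-transitivity shortcut altogether: take an explicit finite generating set of simple based loops $\alpha_1,\dots,\alpha_{2g}$ for $\pi_1(\Sigma_g,z)$, write each $\mathrm{push}(\alpha_j)=t_{c_1}t_{c_2}^{-1}$ for explicit nonseparating curves $c_1,c_2$ bounding an annulus around $\alpha_j$ containing $z$, and verify by direct manipulation of the given twists that each such twist-difference lies in $G$---computations of exactly the flavor carried out in the paper's Lemma~\ref{lemeven} in the multi-puncture setting. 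Second, for $g=1$ the sequence you wrote is not exact: the point-pushing map on the torus is trivial, as $\Mod(\Sigma_{1,1})\cong\Mod(\Sigma_1)$; the theorem is immediate in that case, but your argument should note the exception rather than invoke injectivity of $\mathrm{push}$ uniformly for $g\ge1$.
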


\subsubsection{The Case of Multiple Punctures}
When $p \ge 2$, additional generators are required to capture the interactions between punctures.
A standard finite generating set for $\text{PMod}(\Sigma_{g,p})$ can be constructed by combining generators for the underlying closed surface with twists on curves that enclose pairs of punctures.
A standard finite generating set for $\text{PMod}(\Sigma_{g,p})$ can be constructed by combining generators for the underlying closed surface with twists on curves that enclose pairs of punctures.
To avoid notational conflict with the Humphries generators (which use $a_i, b_i, c_i$), we will denote the puncture-enclosing curves by $e_i$.
\begin{theorem}\cite[Chapter~4]{farb-margalit}
\label{thm:pmod_gen}
For $g \ge 0$ and $p \ge 2$, the pure mapping class group $\text{PMod}(\Sigma_{g,p})$ is generated by the union of two sets of Dehn twists:
\begin{enumerate}
    \item[(i)] A standard generating set of Dehn twists for $\Mod(\Sigma_g)$, such as the $2g+1$ Humphries generators, viewed on $\Sigma_{g,p}$ as being supported on curves disjoint from the punctures.
    \item[(ii)] A set of $p-1$ Dehn twists about simple closed curves $\{e_1, e_2, \dots, e_{p-1}\}$, where each curve $e_i$ encloses only the punctures $z_i$ and $z_{i+1}$.
\end{enumerate}
\end{theorem}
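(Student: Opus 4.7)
My plan is to proceed by induction on $p$, with the Birman exact sequence of Section~\ref{sec:birman} as the main engine. For $g \geq 1$ the base case $p=1$ is exactly Johnson's Theorem~\ref{thm:johnson}, which gives $\text{PMod}(\Sigma_{g,1})$ from the Humphries twists alone, matching the statement with no puncture-enclosing twists. For $g=0$ the cases $p \leq 3$ are trivial groups, and $\text{PMod}(\Sigma_{0,4})$ can be treated directly from its known free presentation, so in either case one has a base from which to induct upward.

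For the inductive step, assume the generating set $H \cup \{t_{e_1},\ldots,t_{e_{p-2}}\}$ works for $\text{PMod}(\Sigma_{g,p-1})$ (with $H$ the Humphries set) and lift each generator to $\text{PMod}(\Sigma_{g,p})$ as the Dehn twist about the \emph{same} simple closed curve, chosen disjoint from the new puncture $z_p$. Let $S$ be the subgroup of $\text{PMod}(\Sigma_{g,p})$ generated by these lifts together with $t_{e_{p-1}}$. The Birman sequence
$$
1 \to \pi_1(\Sigma_{g,p-1}, z_p) \xrightarrow{\text{push}} \text{PMod}(\Sigma_{g,p}) \xrightarrow{F} \text{PMod}(\Sigma_{g,p-1}) \to 1
$$
shows $S$ surjects onto the quotient, so it is enough to prove $S$ contains the point-pushing kernel. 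I would take a free basis of the kernel consisting of the $2g$ handle loops $\alpha_j,\beta_j$ together with puncture loops among $\gamma_1,\ldots,\gamma_{p-1}$ (with one removed via the surface relation in $\pi_1$), and verify that each push map lies in $S$.

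Two cases follow without new ideas. The push map of $\gamma_{p-1}$ equals $t_{e_{p-1}}^{\pm 1}$ via the formula $\text{push}(\gamma)=t_{c_1}t_{c_2}^{-1}$ and the vanishing of twists about once-punctured disk boundaries, so it lies in $S$ by construction. Each $\text{push}(\alpha_j)$ and $\text{push}(\beta_j)$ is a product $t_c t_{c'}^{-1}$ of twists about two parallel copies of a Humphries curve separated by $z_p$, and these lie in the Humphries subgroup by the chain-and-lantern identities underlying Johnson's Theorem~\ref{thm:johnson}, applied locally near the handle in question.

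The main obstacle is the remaining case $\text{push}(\gamma_i) = t_{d_i}^{\pm 1}$ for $i \leq p-2$, where $d_i$ encloses the \emph{non-adjacent} pair $\{z_i,z_p\}$ and is not among the chosen $e_j$. Since every homeomorphism in $\text{PMod}(\Sigma_{g,p})$ fixes each puncture individually, no such homeomorphism can take $e_{p-1}$ to $d_i$, and so $t_{d_i}$ cannot arise as a conjugate of $t_{e_{p-1}}$ but must be extracted by combining $t_{e_{p-1}}$ with the lifted adjacent pair twists $t_{e_j}$ and the Humphries generators via longer relations. My plan is to apply the lantern relation to four-holed planar subsurfaces enclosing triples $\{z_j,z_k,z_p\}$: each such lantern collapses on its once-punctured-disk boundary loops to an identity among three pair-enclosing twists and one triple-enclosing twist, and cascading these identities downward from the anchor $t_{d_{p-1}}=t_{e_{p-1}}$ should express each $t_{d_i}$ inside $S$. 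The delicate bookkeeping will be in reducing each triple-enclosing twist to $S$ without circular dependencies and in tracking the non-commutativity of the Dehn twists so that the cascade actually closes; I expect this final cascade to require the most care in the argument.
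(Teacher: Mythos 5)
Your skeleton --- induction on $p$ driven by the Birman exact sequence of Section~\ref{sec:birman}, with Johnson's Theorem~\ref{thm:johnson} as base case and the kernel analyzed through point-pushing maps --- is exactly the route the paper intends (the survey gives no proof of its own, citing Farb--Margalit, but its Section on the Birman sequence sketches precisely this induction). Your identification $\mathrm{push}(\gamma_{p-1})=t_{e_{p-1}}^{\pm 1}$ is correct, and your observation that $t_{d_i}$ cannot be a conjugate of $t_{e_{p-1}}$ by a \emph{pure} mapping class is a genuine and correctly diagnosed obstruction. However, two of your steps are gaps rather than proofs. The first concerns the handle-loop pushes: you assert that $\mathrm{push}(\alpha_j)=t_ct_{c'}^{-1}$ lies in the Humphries subgroup ``by the chain-and-lantern identities underlying Johnson's Theorem, applied locally near the handle.'' The local version of this claim fails. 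The chain relation gives $(t_{a_j}t_{b_j})^6=t_{\partial N}$ where $N$ is a regular neighborhood of the chain, and since an isotopy cannot pass through the puncture $z_p$, this only ever produces the boundary copy on the side of $z_p$ \emph{away} from the handle; no chain supported on the lifted Humphries curves has the far-side copy $c$ as its neighborhood boundary, so $t_ct_{c'}^{-1}$ is not a local consequence of these identities. Invoking Johnson globally does not rescue this: transplanting a word valid in $\Mod(\Sigma_{g,1})$ into $\mathrm{PMod}(\Sigma_{g,p})$ changes the element by twists about curves enclosing subsets of the remaining punctures --- exactly the elements you have not yet placed in $S$.

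The second gap is that your lantern cascade is circular as described. In a disk containing only $\{z_j,z_k,z_p\}$, the twists about once-punctured boundary circles vanish and the lantern collapses to the single identity $t_xt_yt_z=t_\Delta$ relating the three pair-enclosing twists to the triple-enclosing twist; this one relation is simultaneously your proposed source for $t_\Delta$ and your tool for extracting the non-adjacent pair twist, and there is no second planar relation to break the circle. That planar input alone cannot suffice is not a bookkeeping issue but a fact: in the pure braid group of a disk the adjacent pair twists genuinely fail to generate (the abelianization is $\mathbb{Z}^{n(n-1)/2}$, while you have only $n-1$ adjacent generators), so any correct cascade must import non-planar relations. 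The standard repair, and essentially the Farb--Margalit argument behind the cited theorem, is to avoid the curves $d_i$ altogether when $g\ge 1$: re-choose the free basis of $\pi_1(\Sigma_{g,p-1},z_p)$ to consist entirely of \emph{simple nonseparating} loops (possible since $g\ge 1$, as each $\gamma_i$ is a product of such loops), so that every kernel generator is $t_ct_{c'}^{-1}$ with both curves nonseparating, and then reduce these to the generating set using lantern relations whose boundary components run over a handle and are conjugates of Humphries curves by elements already produced. For $g=0$, where handle loops are absent, the cascade must instead be anchored in the sphere relations (a curve enclosing $\{z_i,\dots,z_j\}$ is isotopic to one enclosing the complementary punctures), which is what makes adjacent pair twists suffice there. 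As written, both of your key reductions would fail, so the proposal does not yet constitute a proof.
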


\begin{figure}[h!]

\centering
\includegraphics[width=0.8\textwidth]{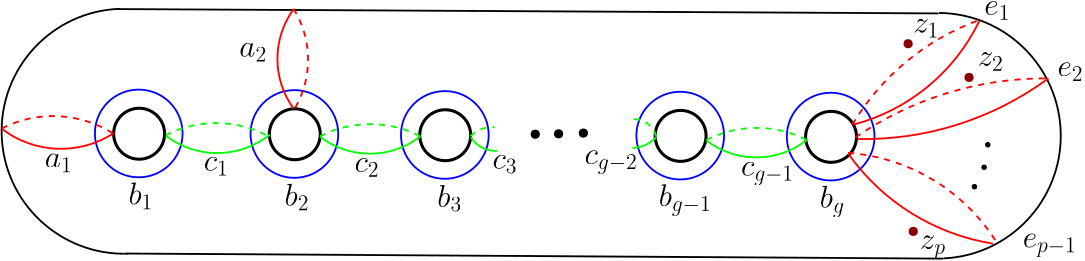}
\caption{A generating set for $\text{PMod}(\Sigma_{g,p})$. It includes the standard Humphries generators  for the genus part of the surface, plus twists about curves like that enclose pairs of punctures.}
\label{fig:pmod-generators}
\end{figure}

The generating set for $\text{PMod}(\Sigma_{g,p})$ can be refined.
For instance, it is known that not all $p-1$ puncture-pair twists are needed if one makes use of the lantern relation.
It remains an open question to determine the minimal number of generators, particularly when they are restricted to being pairwise conjugate or torsion elements of small order.
A significant result in this area is due to Monden, who in $2011$ proved the following theorem.
\begin{theorem}~\cite{Monden11}
For $g \ge 1$ and $p \ge 2$, the mapping class group $\Mod(\Sigma_{g, p})$ is generated by three elements.
\end{theorem}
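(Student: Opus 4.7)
The plan is to exhibit three explicit elements of $\Mod(\Sigma_{g,p})$ and show that they generate a subgroup containing the finite generating set of Theorem~\ref{thm:punctured generators}. That generating set decomposes into three natural families: (a) Humphries-type Dehn twists $t_{a_i}, t_{b_i}, t_{c_i}$ supported on curves disjoint from the punctures, (b) Dehn twists $t_{e_i}$ about simple closed curves enclosing pairs of consecutive punctures (as in Theorem~\ref{thm:pmod_gen}), and (c) the elementary half-twists $H_1, \dots, H_{p-1}$ realizing the symmetric group action on the punctures. The task is to recover all three families from only three elements.

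Building on Korkmaz's minimal generation of the closed surface mapping class group (Theorem~\ref{thm:korkmaz}), the natural choice is:
\begin{itemize}
\item[(i)] $f_1 = t_{a_2}$, a Dehn twist about a nonseparating curve disjoint from all punctures;
\item[(ii)] $f_2$, a finite-order element generalizing Korkmaz's torsion $S = t_{b_g}t_{c_{g-1}}\cdots t_{a_1}$, adapted so that the underlying rotational symmetry cyclically permutes the punctures as well, arranged for instance by placing the punctures along a symmetric orbit of the axis;
\item[(iii)] $f_3 = H_1$, an elementary half-twist about an arc joining two consecutive punctures.
\end{itemize}

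The verification would proceed in three stages. First, show that $\langle f_1, f_2 \rangle$ contains every Humphries twist, by a direct adaptation of Korkmaz's orbit argument: the cyclic conjugation action of $f_2$ on $f_1$ reproduces the twists $t_{a_i}$, from which the remaining twists $t_{b_i}, t_{c_i}$ follow exactly as in the closed case. Second, use $f_3^2 = t_{\partial N}$, which is a Dehn twist about the boundary of a disk neighborhood of the connecting arc, and conjugate by suitable products in $\langle f_1, f_2 \rangle$ (in particular by powers of $f_2$ that translate between distinct pairs of punctures) to sweep out all puncture-pair twists $t_{e_i}$ required by Theorem~\ref{thm:pmod_gen}. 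Third, conjugate $f_3$ by powers of $f_2$ to obtain every elementary half-twist $H_i$, and observe that these project onto a set of transpositions generating $S_p$; together with the pure twists already exhibited, this closes the Birman-type extension $1 \to \text{PMod}(\Sigma_{g,p}) \to \Mod(\Sigma_{g,p}) \to S_p \to 1$ and completes the argument.

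The principal obstacle is arithmetic compatibility of the two symmetries that $f_2$ must enforce simultaneously. A single rotational homeomorphism of a symmetrically embedded $\Sigma_{g,p}$ cannot in general cycle the $2g+1$ Humphries curves in the combinatorial pattern required by Korkmaz's orbit argument while also acting as a transitive cycle on the $p$ punctures, especially when $g$ and $p$ are coprime or stand in awkward ratios. The resolution, in the spirit of Monden's construction, is to replace a rigid rotation by a carefully chosen product of a Korkmaz-type torsion element with a point-pushing map drawn from the kernel of the Birman exact sequence, arranged so that the composite still has finite order and retains the required combinatorial behaviour on both the handles and the punctures. Orchestrating this trade-off while keeping the generating set of size three is the crux of the construction.
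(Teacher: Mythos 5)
Your proposal is a plan, not a proof, and the place where it stops is precisely the place where the theorem lives. You correctly identify that a single rigid finite-order element cannot in general both drive Korkmaz's orbit argument on the Humphries curves and act as a transitive cycle on the $p$ punctures (an order-$(4g+2)$ rotation can cyclically permute a free orbit of punctures only under divisibility constraints relating $p$ and $4g+2$), but your proposed resolution --- composing a Korkmaz-type torsion element with a point-pushing map ``arranged so that the composite still has finite order and retains the required combinatorial behaviour'' --- is asserted rather than constructed. Composing a finite-order mapping class with an element of the Birman kernel generically destroys finite order, and exhibiting such a composite with the stated properties (or abandoning finite order and still making the three stages run) is exactly the content of Monden's construction, which the survey describes as proceeding through intricate manipulations of a known Dehn-twist generating set using the lantern relation and other relations, not through a wholesale transport of the closed-surface orbit argument. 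Until $f_2$ is written down and its conjugation action on $f_1$ and on $H_1$ is verified curve by curve, stages one through three are all conditional.

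A second, independent gap concerns the range of genus. The statement claims $g \ge 1$, but your first stage invokes Theorem~\ref{thm:korkmaz}, which holds only for $g \ge 3$; nothing in your outline covers $g = 1$ or $g = 2$, where $\Mod(\Sigma_g)$ is not even generated by Korkmaz's pair and where the element $S$ of order $4g+2$ behaves differently (for instance, its image in the abelianization obstructs some two-element torsion schemes in low genus). Relatedly, your second stage is looser than it looks: conjugating $f_3^2 = t_{\partial N}$ by powers of $f_2$ produces twists about curves enclosing the correct pairs of punctures, but generally in the wrong isotopy classes (the image curves may wind through the handles), so recovering the specific curves $e_i$ of Theorem~\ref{thm:pmod_gen} requires a change-of-coordinates argument carried out inside the subgroup already generated --- another step Monden's proof handles explicitly via relations and which your sketch leaves open. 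In short, the architecture (recover the Humphries twists, the puncture-pair twists, and the half-twists, then close the extension over $S_p$) is sound and consistent with the strategy the paper attributes to \cite{Monden11}, but the proof has a genuine hole at its declared crux and fails outright for $g \le 2$.
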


Monden's work addressed the case of multiple punctures.
Monden's proof  builds upon the well-established generating sets consisting of Dehn twists.
The general strategy is to start with a known, often larger, set of generators and show that they can be expressed as products of a smaller, carefully chosen set of three elements.
The proof involves intricate manipulations of these generators and relies on the lantern relation and other known relations within the mapping class group.
The key is to select three specific mapping classes whose combinations can produce a standard set of generators, such as the Humphries generators \cite{humphries}.
The question of whether the number of generators could be further reduced remained open for some time.
It is a known fact that for most genera, the mapping class group has a non-abelian finite quotient, which implies it cannot be generated by a single element.
Thus, the theoretical minimum for a non-cyclic group is two.
In a more recent work, Monden  has improved upon the previous result.
It has now been established that:

\begin{theorem}\cite{Monden24}
For $g \ge 3$ and $p \ge 0$, the mapping class group $\Mod(\Sigma_{g, p})$ is generated by two elements.
\end{theorem}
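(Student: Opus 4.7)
The plan is to build on Korkmaz's two-element generation of the closed case (Theorem~\ref{thm:korkmaz}), which already settles $p=0$, and to extend the construction to $p\geq 1$ using the structural descriptions of $\Mod(\Sigma_{g,p})$ recorded in Theorems~\ref{thm:punctured generators} and \ref{thm:pmod_gen}. I would first realize $\Sigma_{g,p}$ as a symmetric embedding in $\mathbb{R}^3$ whose underlying closed surface admits Korkmaz's rotational symmetry $S$ of order $4g+2$, then distribute the $p$ punctures as a union of orbits (of $S$ or of an auxiliary symmetry) chosen so that a single finite-order element simultaneously moves the punctures and the Humphries curves $a_i, b_i, c_i$ of the closed part.

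Explicitly, I would propose the two generators
\[
Y = S \cdot \sigma, \qquad X = t_{a_2}\cdot h,
\]
where $\sigma$ is a puncture-cycling mapping class commuting with $S$ up to a controlled correction, $t_{a_2}$ is Korkmaz's Dehn twist about a nonseparating Humphries curve chosen disjoint from all punctures, and $h$ is an elementary half-twist along a short arc joining two adjacent punctures. The proof would then proceed in four steps. First, take suitable powers and products of $Y$ to isolate both $S$ and a pure cyclic permutation of the punctures from the combined symmetry. Second, use $h$ and its $Y$-conjugates to cancel the half-twist factor in $X$, recover the pure Dehn twist $t_{a_2}$, and then run Korkmaz's orbit argument under conjugation by $S$ to obtain every Humphries twist on $\Sigma_{g,p}$. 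Third, invoke the point-pushing description from the Birman exact sequence (Section~\ref{sec:birman}) to express the puncture-enclosing twists $t_{e_1},\dots,t_{e_{p-1}}$ of Theorem~\ref{thm:pmod_gen} as products of Humphries twists with suitable $Y$-conjugates of $h$. Fourth, combine $h$ with the cyclic permutation extracted in step one to generate all elementary half-twists $H_1,\dots,H_{p-1}$, and conclude via Theorem~\ref{thm:punctured generators}.

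The main obstacle will be the arithmetic coordination of the two generators: one must arrange that a single power of $Y$ returns precisely $S$ while another power returns a full cycle on the punctures, and that $h$ interacts cleanly enough with conjugation by $S$ to produce both the symmetric-group factor $S_p$ and the puncture-enclosing twists without disturbing the closed-surface orbit argument. This coordination becomes delicate when $\gcd(4g+2, p)$ is nontrivial and when several orbits of punctures are needed, so the argument will likely branch into cases according to the divisibility relations between $g$ and $p$, with the point-pushing analysis in the Birman sequence forming the technical core of the verification.
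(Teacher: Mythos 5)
First, note that the survey does not prove this statement: it is quoted from \cite{Monden24} without proof, so the fair comparison is with Monden's method and with the analogous computation the survey does carry out in full, namely Lemma~\ref{lemeven} and the discussion following it. Measured against either, your proposal has two genuine gaps. The first is the plan to ``isolate'' $S$ and a pure puncture cycle from powers of $Y=S\sigma$. Even in the best case, where $\sigma$ commutes with $S$, the cyclic group $\langle Y\rangle$ contains only the elements $S^k\sigma^k$, so $S\in\langle Y\rangle$ forces the orders of $S$ and $\sigma$ to be coprime; since $4g+2$ is always even, this fails outright whenever $p$ is even, which is not a borderline case but half of the theorem's range. You acknowledge the issue and defer to ``branching into cases,'' but no choice of orbit placement rescues a uniform statement for all $p\ge 0$, and the existence of a full $p$-cycle $\sigma$ commuting with the order-$(4g+2)$ rotation is itself unjustified (the rotation constrains where punctures can sit and how any commuting symmetry may permute them). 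The proofs that actually work --- Monden's, and the survey's own Lemma~\ref{lemeven} with the combined element $R=\rho_1\rho_2$ that simultaneously rotates handles and cycles punctures --- never split the mixed symmetry into a closed-surface part and a puncture part; they conjugate by the combined element directly and extract what they need from differences of conjugates.

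The second gap is a circularity in your step two: ``use $h$ and its $Y$-conjugates to cancel the half-twist factor in $X$'' presupposes that $h$ already lies in $\langle X,Y\rangle$, which is never established --- $h$ occurs only fused inside $X=t_{a_2}h$. The standard mechanism, visible in Lemma~\ref{lemeven}, is the opposite: one chooses conjugates $Y^kXY^{-k}$ whose half-twist parts are supported on the \emph{same} arcs, so that suitable products cancel the half-twists and yield products of opposite Dehn twists such as $t_{c}t_{c'}^{-1}$; one then invokes \cite[Theorem 5]{korkmaz2020} to recover all Humphries twists from $R$ together with $t_{a_1}t_{a_2}^{-1}$, $t_{b_1}t_{b_2}^{-1}$, $t_{c_1}t_{c_2}^{-1}$, and only \emph{afterwards} does one recover the half-twists (now expressible from $X$ and the twists already obtained) and verify the surjection onto $S_p$ via explicit cycles as in \cite[Theorem B]{iz}. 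Your outline runs this dependency chain backwards, and your step three also misstates the role of the Birman sequence: the puncture-enclosing twists $t_{e_i}$ of Theorem~\ref{thm:pmod_gen} arise from point-pushing as products $t_{c_1}t_{c_2}^{-1}$ of twists about the two boundary curves of an annulus, not as products of Humphries twists with conjugates of a half-twist. So while your choice of ingredients (a Korkmaz-style symmetry, a twist coupled with a half-twist) is in the right spirit, the two load-bearing steps --- power-splitting of $Y$ and half-twist cancellation --- would fail as written and must be replaced by the conjugate-and-cancel calculus that Monden and the survey's Lemma~\ref{lemeven} employ.
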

This result is indeed minimal, as the mapping class group is not cyclic.
For surfaces with punctures, the question of finding the minimal number of involution generators is also a key problem.
Recent work by Altunöz, Pamuk, and Yildiz \cite{altunoz-pamuk-yildiz2023} provides specific bounds on the number of involution generators for the mapping class group of a punctured orientable surface $\Sigma_{g,p}$.
Their results demonstrate that for sufficiently high genus and a certain number of punctures, the mapping class group can be generated by a small number of involutions.
\begin{theorem}  \cite{altunoz-pamuk-yildiz2023}\label{thm_2023_tay}
Let $\Sigma_{g,p}$ be a connected orientable surface of genus $g$ with $p$ punctures.
The mapping class group $\mathrm{Mod}(\Sigma_{g,p})$ is generated by:
\begin{itemize}
    \item[(i)] three involutions if $g \ge 14$ and $p \ge 10$ is even.
    \item[(ii)] four involutions if $g \ge 13$ and $p \ge 9$ is odd.
    \item[(iii)] four involutions if $3 \le g \le 6$ and $p \ge 4$ is even.
\end{itemize}
\end{theorem}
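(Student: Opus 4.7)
The plan is to extend the involution-generation techniques of Theorems~\ref{thm:korkmaz-involution} and~\ref{thm:kassabov} to the punctured setting, using the Birman exact sequence from Section~\ref{sec:birman} to handle the new puncture-enclosing and puncture-permuting generators of Theorems~\ref{thm:punctured generators} and~\ref{thm:pmod_gen}. First I would fix a symmetric embedding of $\Sigma_{g,p}$ in $\mathbb{R}^3$ in the Korkmaz--Kassabov style, with the handles arranged along a horizontal axis, and with the $p$ punctures distributed so as to be compatible with two perpendicular $\pi$-rotations $\rho_1$ and $\rho_2$. For even $p$, I would place the punctures symmetrically so that $\rho_1$ and $\rho_2$ permute them in pairs; for odd $p$, at least one puncture must lie on a rotation axis and therefore be fixed. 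The product $R = \rho_1\rho_2$ then acts as a finite-order rotation whose conjugation action cyclically permutes both the standard Humphries curves $a_i, b_i, c_i$ and the puncture-enclosing curves $e_i$.

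Next I would define the remaining involution $I_3$ (and, in the odd case, also $I_4$) as a product of a conjugate of $\rho_1$ with a carefully chosen finite product $F$ of Dehn twists drawn from the $a_i, b_i, c_i, e_i$ families, using Korkmaz's observation that for an involution $\rho$ with $\rho(x) = y$, the element $\rho\, x\, y^{-1}$ is again an involution. The factors of $F$ would be arranged so that, after conjugating $I_3$ by powers of $R$ and invoking the lantern relation, the subgroup $G = \langle \rho_1,\rho_2,I_3\rangle$ contains the Humphries twists together with enough $t_{e_i}$ to apply Theorem~\ref{thm:pmod_gen} in the pure case. In the odd case, the extra involution $I_4$ would be designed so that its image under the forgetful map $\pi\colon \Mod(\Sigma_{g,p}) \to S_p$ supplies the transpositions missing from the permutation action of $\rho_1,\rho_2,I_3$.

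The parity of $p$ is the crux of the obstruction and explains the three-versus-four split in the statement. For even $p$, each $\rho_i$ contributes a product of disjoint transpositions in $S_p$, and these together with the puncture-enclosing twists can generate the full symmetric factor in the sequence of Section~\ref{sec:birman}. For odd $p$, any involution in $\Mod(\Sigma_{g,p})$ necessarily fixes an odd number of punctures, so a three-involution set cannot realize the full permutation action on $P$ while also reconstructing the closed-surface part; hence a fourth involution must be added in case (ii). The genus bounds $g \geq 14$ and $g \geq 13$ reflect precisely the number of handles required to simultaneously embed the lanterns used in the Korkmaz reduction on each side of the rotation axes, while still leaving room for the symmetric puncture placement.

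The main obstacle I anticipate is case (iii), where $3 \leq g \leq 6$ is too small to directly apply the Korkmaz embedding argument. Here I would exploit the $p \geq 4$ punctures as additional topological complexity: the punctures themselves can serve as boundary points of lanterns in $\Sigma_{0,4}$-type subsurfaces, allowing the needed Dehn-twist relations to be realized even when the underlying closed surface would be too small for the standard construction. A careful bookkeeping of how the four involutions interact with both the handle structure and the puncture configuration, together with explicit verification via the $\rho\, x\, y^{-1}$ trick that each $I_k$ indeed squares to the identity, should complete the argument.
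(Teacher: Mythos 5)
Your overall architecture does match the method the paper lays out around Lemma~\ref{lemeven}: a symmetric embedding of $\Sigma_{g,p}$ in $\mathbb{R}^3$ invariant under $\pi$-rotations $\rho_1,\rho_2$, the rotation $R=\rho_1\rho_2$ cyclically permuting the Humphries curves and the punctures, Korkmaz's $\rho\,x\,y^{-1}$ trick to make the third generator an involution, and reduction to a known generation criterion (the paper feeds $R$, $t_{a_1}t_{a_2}^{-1}$, $t_{b_1}t_{b_2}^{-1}$, $t_{c_1}t_{c_2}^{-1}$ into \cite[Theorem 5]{korkmaz2020}). But there is a genuine gap at the heart of your construction: you take the non-rotational part $F$ of $I_3$ to be a product of Dehn twists only, drawn from the $a_i,b_i,c_i,e_i$ families. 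Every Dehn twist is a \emph{pure} mapping class, so under the surjection $\pi\colon \Mod(\Sigma_{g,p})\to S_p$ your three generators map into the dihedral subgroup of order $2p$ generated by the images of $\rho_1$ and $\rho_2$ (a conjugate of $\rho_1$ by a word in your generators does not leave this subgroup). For $p\geq 4$ this is a proper subgroup of $S_p$, so $\langle\rho_1,\rho_2,I_3\rangle$ can never surject onto $S_p$, no matter how cleverly $F$ is chosen; containing $\mathrm{PMod}(\Sigma_{g,p})$ together with all the $t_{e_i}$ does not help, because the $t_{e_i}$ are also pure. This is precisely why the paper's third generator is $\rho_1G_1$ with $G_1=H_{b-1,b}H_{b+1,b}^{-1}t_{c_3}t_{b_6}t_{a_7}t_{a_8}^{-1}t_{b_9}^{-1}t_{c_{11}}^{-1}$: the half-twist pair $H_{b-1,b}H_{b+1,b}^{-1}$ maps to a $3$-cycle in $S_p$, and together with the $p$-cycle image of $R$ this generates $S_p$ for even $p$ by \cite[Theorem B]{iz}; the involution trick still applies because $\rho_1$ carries the first half of $G_1$ (half-twist and three Dehn twists) to the second half, so $\rho_1G_1$ has the form $\rho\,x\,y^{-1}$ with $\rho_1x\rho_1=y$.

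Two secondary points. First, your parity ``obstruction'' for case (ii) is unsound: an involution fixing an odd number of punctures does not prevent three involutions from realizing the full permutation action ($S_p$ is generated by three involutions for every $p$), and neither the survey nor \cite{altunoz-pamuk-yildiz2023} claims three involutions are impossible for odd $p$ --- only that four suffice; minimality is asserted only in case (i), against \emph{two} involutions. The actual source of the even/odd split is the $S_p$-generation step: the $p$-cycle and the $3$-cycle are both even permutations when $p$ is odd, so they generate at most $A_p$, and the construction needs an extra involution to recover odd permutations. Second, the genus bounds do not come from lantern embeddings but from the length of the chain of curves $c_3,b_6,a_7,a_8,b_9,c_{11}$ and its $R$-translates consumed by the conjugation calculus ($G_2,\dots,G_9$ in Lemma~\ref{lemeven}); it is exactly this bookkeeping that the paper tightens to push case (i) down to $g=13$.
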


These results are significant as they provide concrete numbers of involution generators for mapping class groups of punctured surfaces, contributing to the broader understanding of their algebraic structure  and the result in the first case is sharp since the mapping class group is not generated by two involutions.
The proofs of these statements often involve intricate geometric constructions of the involutions and detailed analysis of their products, showing that they can be combined to produce a known set of generators for the mapping class group, such as a set of Dehn twists.  In this chapter, we improve upon our prior results by demonstrating that Theorem~\ref{thm_2023_tay} (i) also holds for $g=13$.  The proof that follows not only establishes this improvement but also lays out the same method used to prove the original Theorem~\ref{thm_2023_tay}.
We consider embeddings of the surface $\Sigma_{g,p}$ in $\mathbb{R}^3$ that remain invariant under the $\pi$ rotations $\rho_1$ and $\rho_2$ about the z-axis (shown in ~\cite[Figure 1]{altunoz-pamuk-yildiz2023} for $g=2k$ and Figure~\ref{goddpeven_tay} for $g=2k+1$).
The mapping class group $\Mod(\Sigma_{g,p})$ contains the element $R = \rho_1\rho_2$ with the following properties:
\begin{itemize}
\item [(i)]   $R(a_i)=a_{i+1}$, $R(b_i)=b_{i+1}$ for $i=1,\ldots,g-1$ and $R(b_g)=b_{1}$,
\item [(ii)]  $R(c_i)=c_{i+1}$ for $i=1,\ldots,g-2$,
\item [(iii)] $R(z_1)=z_p$ and $R(z_i)=z_{i-1}$ for $i=2,\ldots,p$.
\end{itemize}
\begin{figure}[h]
\begin{center}
\scalebox{0.25}{\includegraphics{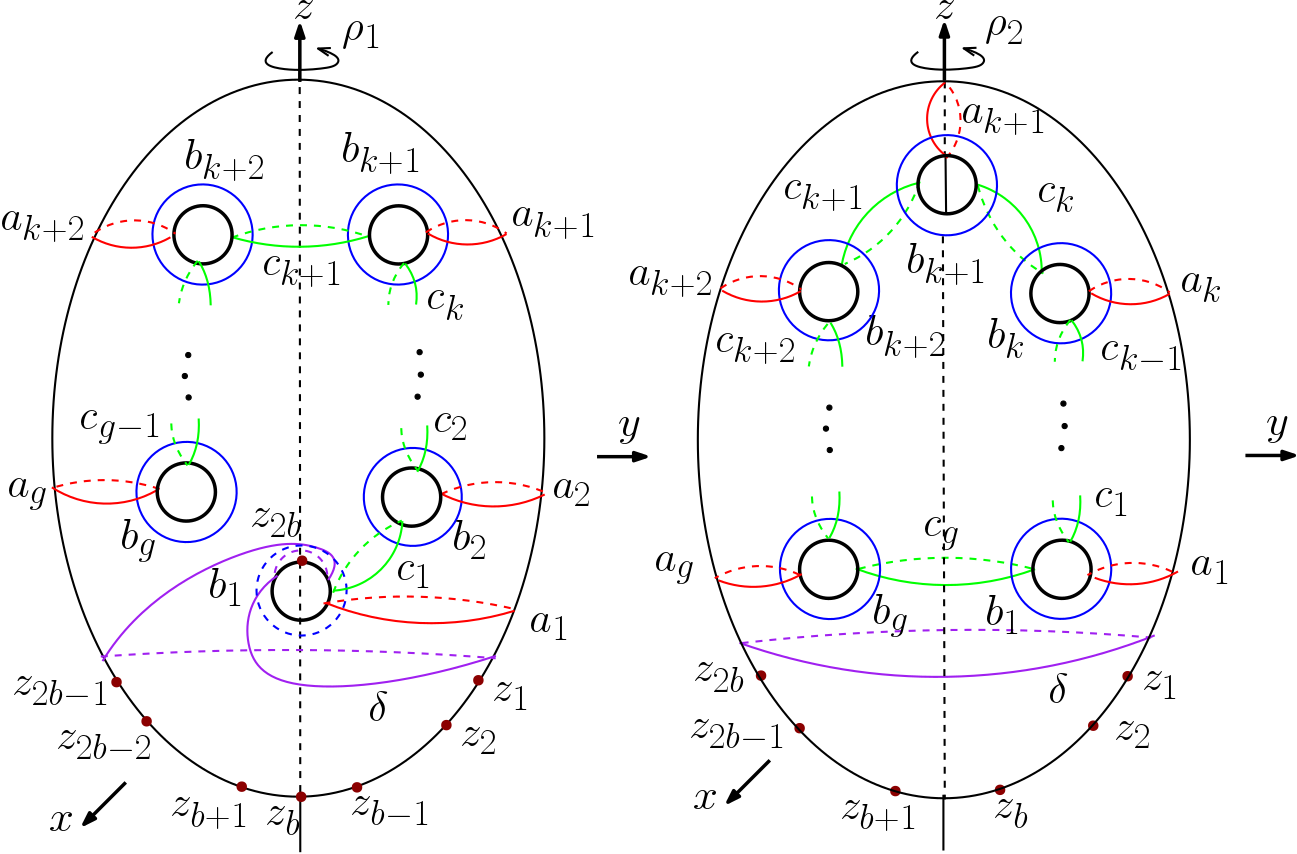}}
\caption{The involutions $\rho_1$ and $\rho_2$ if $g=2k+1$ and $p=2b$.}
\label{goddpeven_tay}
\end{center}
\end{figure}
Let $H_{i,j}$ denote the half-twist supported on the twice-punctured disk enclosing the punctures $z_i$ and $z_j$ on $N_{g,p}$.
Note that $RH_{i,j}R^{-1}=H_{i-1,j-1}$ by the action of $R$ on the punctures.
\begin{lemma}\label{lemeven}
For $g=13$ and for every even integer $p=2b\geq 8$, the subgroup of $\Mod(\Sigma_{g,p})$ generated by the elements 
\[
\rho_1, \rho_2 \textrm{ and }\rho_1H_{b-1,b}H_{b+1,b}^{-1}t_{c_3}t_{b_{6}}t_{a_{7}}
t_{a_{8}}^{-1}t_{b_{9}}^{-1}t_{c_{11}}^{-1}\]
contains the Dehn twists $t_{a_{i}}$, $t_{b_{i}}$ and $t_{c_{i}}$  for $i=1,\ldots,13$.
\end{lemma}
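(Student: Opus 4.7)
The plan is to work inside the subgroup $G := \langle \rho_1, \rho_2, I_3\rangle$, where $I_3$ denotes the third generator in the statement. Since $\rho_1$ is an involution, the element
\[ F := \rho_1 I_3 = H_{b-1,b}H_{b+1,b}^{-1}\, t_{c_3}t_{b_{6}}t_{a_{7}}t_{a_{8}}^{-1}t_{b_{9}}^{-1}t_{c_{11}}^{-1} \]
lies in $G$, and so does $R := \rho_1\rho_2$. The first step is a direct inspection of the standard Humphries intersection data, which shows that the six curves $c_3, b_6, a_7, a_8, b_9, c_{11}$ are pairwise disjoint and all disjoint from the twice-punctured disks supporting $H_{b-1,b}$ and $H_{b+1,b}$; hence the eight factors of $F$ commute pairwise, and $F$ may be reordered freely throughout the argument.

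Next, conjugation by $R$ generates a family of shifted copies of $F$ in $G$. Using $R t_x R^{-1}=t_{R(x)}$ together with $R(a_i)=a_{i+1}$, $R(b_i)=b_{i+1}$, $R(c_i)=c_{i+1}$ and $R H_{i,j} R^{-1}=H_{i-1,j-1}$, the element $F_k := R^k F R^{-k}\in G$ is obtained from $F$ by shifting every $a$-, $b$-, or $c$-index by $+k$ modulo $13$, and every puncture index of a half-twist by $-k$ modulo $p$. Commutativity persists for the factors of each $F_k$, so after isolating the half-twist parts from the Dehn-twist parts, arbitrary products of the $F_k$'s may be rearranged at will.

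The heart of the argument is a telescoping calculation exploiting the signs in $F$. Its six Dehn twists occur as three oppositely-signed pairs, $t_{a_7}t_{a_8}^{-1}$, $t_{b_6}t_{b_9}^{-1}$ and $t_{c_3}t_{c_{11}}^{-1}$, with index gaps $1$, $3$ and $8$ respectively. Multiplying $F$ by the shifted copies $F_{-1}$, $F_{-3}$, $F_{-8}$ and their iterates produces cancellations of the form $t_xt_x^{-1}=1$ that collapse long runs of $a$-, $b$-, and $c$-twists independently, exactly in the spirit of the proof of Theorem~\ref{thm_2023_tay} in \cite{altunoz-pamuk-yildiz2023}. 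The goal is to combine these cancellations so that the resulting word has Dehn-twist part a single $t_{a_i}$, while the half-twist part is the $3$-cycle $(z_{b-1},z_b,z_{b+1})$ on the punctures. The half-twist remainder is then killed by multiplying with a pure word in the $R$-conjugates of $H_{b-1,b}H_{b+1,b}^{-1}$, which is shown to lie in $G$ by running the same shifting trick on a word in which all Dehn twists have already been cleared; the hypothesis $p\ge 8$ even provides enough room for this final cancellation.

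Once one Dehn twist, say $t_{a_7}$, is inside $G$, conjugation by powers of $R$ yields $t_{a_i}$ for every $i\in\{1,\dots,13\}$, because $R$ cycles the $a$-curves through all $13$ indices; the same strategy, with the telescoping realigned to the $b$- or $c$-pair, yields one $t_{b_i}$ and one $t_{c_i}$, and a further $R$-spread produces the entire Humphries family. The main obstacle is the bookkeeping in the telescoping step: one must verify that a single word in the $F_k$'s simultaneously telescopes all three pairs of twists, which live at the incompatible ``frequencies'' $1,3,8$, leaves exactly one surviving Dehn twist, and produces a residual permutation of punctures that can be cancelled. This is precisely the combinatorial content that upgrades the even-genus case of Theorem~\ref{thm_2023_tay} to the odd genus $g=13$, and it is the step where the careful choice of the indices $3,6,7,8,9,11$ in $I_3$ becomes essential.
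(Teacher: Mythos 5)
Your proposal fails at its central mechanism. You claim that all eight factors of $F$ commute and that ``arbitrary products of the $F_k$'s may be rearranged at will,'' but neither holds. Within $F$ itself, the half-twists $H_{b-1,b}$ and $H_{b+1,b}$ share the puncture $z_b$, so their supporting disks intersect and they satisfy a braid relation, not commutation. More seriously, factors from \emph{different} shifted copies do not commute: for example $F_{-2}=R^{-2}FR^{2}$ contains $t_{b_4}$, and $i(b_4,c_3)=1$, so $t_{b_4}$ fails to commute with the factor $t_{c_3}$ of $F$. Hence the telescoping scheme cannot be run as described. Worse, even granting free rearrangement, your stated target is provably unreachable by your allowed moves: each $F_k$ contributes exactly one positive and one negative twist in each of the $a$-, $b$- and $c$-families, and a cancellation $t_xt_x^{-1}$ removes one twist of each sign from a single family, so every word built from the $F_k$'s by such cancellations has balanced signs in each family and can never collapse to a single $t_{a_i}$. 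The most such manipulations can yield are differences $t_xt_y^{-1}$. (A smaller slip: the $c$-indices do not shift modulo $13$, since only $c_1,\dots,c_{12}$ appear among the generators and $R$ carries $c_{12}$ to a curve outside this family.)

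The paper's proof exploits exactly the non-commutativity you assume away. It conjugates one shifted copy by products of others, e.g.\ $G_4=(G_2G_3)G_2(G_2G_3)^{-1}$, using the relation $ft_af^{-1}=t_{f(a)}$ and tracking the images of curves under the composite diffeomorphism (here $G_2G_3$ sends $b_4\mapsto c_4$ and $c_9\mapsto b_{10}$, precisely because these curves intersect the twisting curves); the half-twist blocks are unaffected because the conjugators preserve their supports. This manufactures the differences $t_{b_7}t_{c_7}^{-1}$, then $t_{b_{i+1}}t_{c_i}^{-1}$ via $\rho_2(b_7,c_7)=(b_8,c_5)$, then $t_{a_5}t_{b_5}^{-1}$, and finally elements of the form $t_{a_1}t_{b_2}^{-1}$, $t_{b_1}t_{b_2}^{-1}$, $t_{c_1}t_{c_2}^{-1}$. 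The decisive closing ingredient, entirely absent from your proposal, is \cite[Theorem 5]{korkmaz2020}: the subgroup generated by $R$ together with $t_{a_1}t_{a_2}^{-1}$, $t_{b_1}t_{b_2}^{-1}$ and $t_{c_1}t_{c_2}^{-1}$ already contains all the twists $t_{a_i},t_{b_i},t_{c_i}$. That theorem (which packages lantern-type relations) is what converts sign-balanced differences into honest Dehn twists; elementary cancellation cannot, by the parity obstruction above. Your final step of killing a half-twist remainder by a ``pure word in the $R$-conjugates of $H_{b-1,b}H_{b+1,b}^{-1}$'' is likewise unsupported, since producing such a pure word presupposes the very clearing of Dehn twists you are trying to achieve; in the paper this issue never arises because the conjugation moves keep the half-twist blocks intact and they cancel exactly in products such as $G_6G_4^{-1}$.
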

\begin{proof}
Consider the models of $\Sigma_{g,p}$ shown in Figure~\ref{goddpeven_tay}. Let $G$ be the subgroup of $\Mod(\Sigma_{g,p})$ generated by
\(
\rho_1,  \rho_2, \text{ and } \rho_1 G_1,
\)
where $$
G_1 := H_{b-1,b} H_{b+1,b}^{-1} t_{c_3} t_{b_6} t_{a_7} t_{a_8}^{-1} t_{b_9}^{-1} t_{c_{11}}^{-1}.
$$
One can observe that $G$ contains both $R = \rho_1 \rho_2$ and $G_1 = \rho_1 (\rho_1 G_1)$.
Let $G_2$ be the element obtained by the conjugation of $G_1$ by $R^{-2}$.
Since 
\[
R^{-2}(c_{3}, b_{6}, a_7, a_{8}, b_{9}, c_{11}) = (c_{1}, b_{4}, a_5, a_{6}, b_{7}, c_{9})
\]
and
\[
R^{-2}(z_{b-1},z_{b},z_{b+1})=(z_{b+1},z_{b+2},z_{b+3}),
\]
we have
\[G_2=R^{-2}G_{1}R^2=H_{b+1,b+2} H_{b+3,b+2}^{-1} t_{c_1} t_{b_4} t_{a_5} t_{a_6}^{-1} t_{b_7}^{-1} t_{c_{9}}^{-1}\in G. 
\]
\noindent 
Similarly, consider the element $R^3G_2R^{-3}$ contained in $G$, denoted by $G_3$:
$$
G_3=R^3G_2R^{-3}=H_{b-2,b-1} H_{b,b-1}^{-1} t_{c_4} t_{b_7} t_{a_8} t_{a_9}^{-1} t_{b_{10}}^{-1} t_{c_{12}}^{-1}.
$$
Now, let us consider the element $G_4=(G_2G_3)G_2(G_2G_3)^{-1}$.
The diffeomorphism $G_2G_3$ maps the curves $(c_{1},b_{4},a_5,a_{6},b_{7},c_{9})$ to $(c_{1},c_{4},a_5,a_{6},b_{7},b_{10})$, respectively.
The punctures $z_i$ are invariant under the action of $G_2G_3$.
Hence, we get
\begin{eqnarray*}
G_4&=&(G_2G_3)G_2(G_2G_3)^{-1}\\
&=&(G_2G_3)(H_{b+1,b+2} H_{b+3,b+2}^{-1} t_{c_1} t_{b_4} t_{a_5} t_{a_6}^{-1} t_{b_7}^{-1} t_{c_{9}}^{-1})(G_2G_3)^{-1}\\
&=&H_{b+1,b+2} H_{b+3,b+2}^{-1} t_{c_1} t_{c_4} t_{a_5} t_{a_6}^{-1} t_{b_7}^{-1} t_{b_{10}}^{-1}\in G.
\end{eqnarray*}
Then, similarly  we obtain the following two elements contained in $G$:
\begin{eqnarray*}
G_5&=&R^3G_4R^{-3}=H_{b-2,b-1} H_{b,b-1}^{-1} t_{c_4} t_{c_7} t_{a_8} t_{a_9}^{-1} t_{b_{10}}^{-1} t_{b_{13}}^{-1}\textrm{ and }\\
G_6&=&(G_4G_5^{-1})G_4(G_4G_5^{-1})^{-1}=H_{b+1,b+2} H_{b+3,b+2}^{-1} t_{c_1} t_{c_4} t_{a_5} t_{a_6}^{-1} t_{c_7}^{-1} t_{b_{10}}^{-1}.
\end{eqnarray*}
Thus, we conclude that $G_6G_5^{-1}=t_{b_7}t_{c_{7}}^{-1}\in G$. Conjugation by $R$ shows that the subgroup $G$ includes $t_{b_i}t_{c_i}^{-1}$ for each $i=1,\ldots,12$.
\noindent
Moreover, it follows from $\rho_2(b_7,c_7)=(b_8,c_5)$ that the subgroup $G$ contains the element 
\[
t_{b_8}t_{c_{7}}^{-1}=\rho_2(t_{b_7}t_{c_{7}}^{-1})\rho_2.
\]
Again, using the action of $R$, one can obtain the elements $t_{b_{i+1}}t_{c_{i}}^{-1}\in G$ for $i=1,\ldots,12$.
Consider the following elements:
\begin{eqnarray*}
G_7&=&(t_{b_{2}}t_{c_{1}}^{-1}))G_2t_{c_{9}}t_{b_{10}}^{-1}=H_{b+1,b+2} H_{b+3,b+2}^{-1} t_{b_2} t_{b_4} t_{a_5} t_{a_6}^{-1} t_{b_7}^{-1} t_{c_{10}}^{-1},\\
G_8&=&R^3G_7R^{-3}=H_{b-2,b-1} H_{b,b-1}^{-1} t_{b_5} t_{b_8} t_{a_8} t_{a_9}^{-1} t_{b_{10}}^{-1} t_{c_{13}}^{-1} \textrm{ and }\\
G_9&=&(G_7G_8)G_7(G_7G_8)^{-1}=H_{b+1,b+2} H_{b+3,b+2}^{-1} t_{b_2} t_{b_4} t_{b_5} t_{a_6}^{-1} t_{b_7}^{-1} t_{c_{10}}^{-1}.
\end{eqnarray*}
Thus, \( G_7G_9^{-1} = t_{a_5}t_{b_5}^{-1} \in G \). By the action of \( R \), it follows that the elements \( t_{a_i}t_{b_i}^{-1} \) belong to \( G \) for all \( i = 1, \dots, 13 \).
The subgroup $G$ contains the following elements:
\begin{eqnarray*}
t_{b_1}t_{b_{2}}^{-1}&=&(t_{b_1}t_{c_{1}}^{-1})(t_{c_1}t_{b_{2}}^{-1}),\\
t_{c_1}t_{c_{2}}^{-1}&=&(t_{c_1}t_{b_{2}}^{-1})(t_{b_2}t_{c_{2}}^{-1}) \textrm{ and }\\
t_{a_1}t_{b_{2}}^{-1}&=&(t_{a_1}t_{b_{1}}^{-1})(t_{b_1}t_{b_{2}}^{-1})(t_{b_2}t_{c_{2}}^{-1}).\end{eqnarray*}
By \cite[Theorem 5]{korkmaz2020}, the Dehn twists $t_{a_i}$, $t_{b_i}$, and $t_{c_i}$ lie in the subgroup generated by
\(
R, t_{a_1}t_{a_2}^{-1}, t_{b_1}t_{b_2}^{-1} \textrm{ and } t_{c_1}t_{c_2}^{-1},
\)
which concludes the proof.
\end{proof}

Using the same steps of the proof of  ~\cite[Lemma 3.10]{altunoz-pamuk-yildiz2023}, one can show that the pure mapping class group $\text{PMod}(\Sigma_{g, p})$ is contained in the subgroup $G$.
The element $\rho_1\rho_2 \in G$ maps to the $p$-cycle $(1,2,\ldots,p) \in S_p$.
Since $G$ contains all Dehn twists $t_{a_i}, t_{b_i}, t_{c_i}$ (as shown above), it follows that the half-twist composition $H_{b-1,b}H_{b+1,b}^{-1}$ also lies in $G$.
Conjugating this element with $R$, we get $H_{1,2}H_{3,2}^{-1} \in G$, whose image in $S_p$ is the $3$-cycle $(1,2,3)$.
The cycles $(1,2,\ldots,p)$ and $(1,2,3)$ generate $S_p$ when $p$ is even, by \cite[Theorem B]{iz}.
Therefore, the subgroup $G$ is mapped surjectively onto $S_p$. Finally, we conclude that Theorem~\ref{thm_2023_tay} (i) holds true even when $g=13$ and $p\geq 8$ and even.
\section{Surfaces with Boundary}
\subsection{Mapping Class Groups of Simple Surfaces with Boundary}

\begin{example}[The Disk, $\Sigma_0^1$]
The mapping class group of a disk $\Sigma_0^1$ with its boundary fixed pointwise is trivial:
$$ \Mod (\Sigma_0^1) = \{ \id \}. $$
This is a classical result known as the Alexander Lemma\index{Alexander Lemma}.
Any orientation-preserving diffeomorphism of a disk that fixes the boundary pointwise is isotopic to the identity relative to the boundary.
\end{example}

\begin{example}[The Annulus, $\Sigma_0^2$]
Let $A = \Sigma_0^2$ be an annulus\index{annulus} with two boundary components $\partial_1 A$ and $\partial_2 A$, both fixed pointwise.
The mapping class group is isomorphic to the integers:
$$ \Mod(A) \cong \mathbb{Z}. $$
This group is generated by a single Dehn twist $t_c$ about the core curve $c$ of the annulus.
Powers of $t_c$ correspond to twisting one boundary component relative to the other by multiples of $2\pi$.
\begin{figure}[h]
\begin{center}
\scalebox{0.25}{\includegraphics{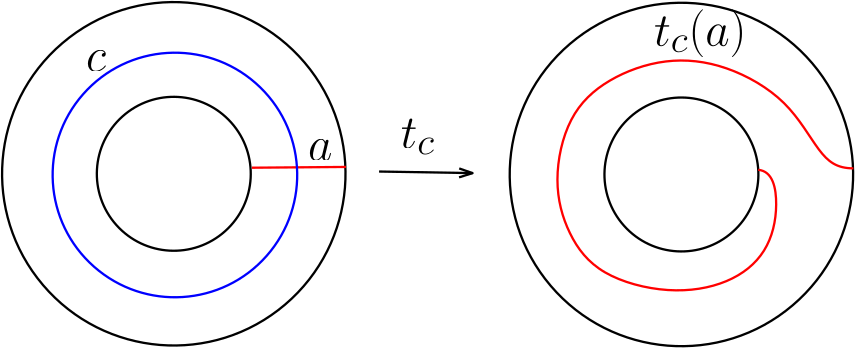}}
\caption{Generator for $\Mod(\Sigma_0^2)$.}
\label{fig:annulus}
\end{center}
\end{figure}
\end{example}

%\begin{example}[The Pair of Pants, %$\Sigma_0^3$]
%Let $P = \Sigma_0^3$ be a pair of pants\index{pair of pants} (a sphere with three disjoint open disks removed).
%If all three boundary components are fixed pointwise, the mapping class group is trivial:
%$$ \Mod(\Sigma_0^3) = \{ \id \}.
%$$
%Any essential simple closed curve in the interior of a pair of pants is parallel to one of the boundary components.
%A Dehn twist about such a curve, while keeping the boundary fixed, would necessarily be supported in an annulus bounded by this curve and the corresponding boundary component.
%Such a twist is isotopic to the identity if the boundary itself is fixed pointwise.
%The situation is analogous to twisting an annulus while holding one of its boundary circles perfectly still, an action which can always be unwound.
%\end{example}

\begin{example}[The Pair of Pants, $\Sigma_0^3$]
Let $P = \Sigma_0^3$ be a pair of pants\index{pair of pants} (a sphere with three disjoint open disks removed).  The mapping class group $\Mod(P)$ is nontrivial: 
$$ \Mod(P)\cong \mathbb{Z}^3. $$
This group is generated by Dehn twists  about the three boundary-parallel curves which clearly commute.  Hence there is an injective homomorphism $\mathbb{Z}^3 \to \Mod(P)$.  By applying the Birman exact sequence to each boundary circle yields a surjection $\Mod(P) \to \Mod(A) \cong \mathbb{Z}$  whose kernel is generated by the Dehn twist about the capped boundary, we see that the subgroup generated by the three boundary-parallel Dehn twists equals 
$\Mod(P)$.

\end{example}

\subsection{Generators for General Surfaces with Boundary}

For a general connected orientable surface\index{surface!with boundary} $\Sigma_g^b$, the mapping class group $\Mod(\Sigma_g^b)$ is finitely generated by Dehn twists.
\begin{theorem}~\cite{korkmaz2023}
For $g\geq 2$, the mapping class group $\Mod(\Sigma_g^b)$ is generated by a finite number of Dehn twists about nonseparating simple closed curves
\[
\{a_1, \dots, a_g, \, b_1, \dots, b_g, \, c_1, \dots, c_{g-1}, \, e_1, \dots, e_{b-1}\}.
\]
\end{theorem}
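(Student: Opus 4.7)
The plan is to leverage the capping homomorphism
\[
\mathrm{cap}\colon \Mod(\Sigma_g^b) \twoheadrightarrow \text{PMod}(\Sigma_{g,b}),
\]
which glues a once-punctured disk onto each of the $b$ boundary components. This homomorphism is surjective, with kernel freely generated by the $b$ boundary-parallel Dehn twists $t_{\partial_1},\dots,t_{\partial_b}$. The strategy is twofold: first, lift generators of $\text{PMod}(\Sigma_{g,b})$ supplied by Theorem~\ref{thm:pmod_gen} up to $\Mod(\Sigma_g^b)$; second, express each boundary twist as a product of these lifts via the lantern relation, so no extra generators are needed.

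For the first step, Theorem~\ref{thm:pmod_gen} asserts that $\text{PMod}(\Sigma_{g,b})$ is generated by the $3g-1$ Lickorish twists $t_{a_i}, t_{b_i}, t_{c_i}$ (supported away from the punctures) together with $b-1$ twists about curves enclosing consecutive pairs of punctures. Each of these naturally lifts to a Dehn twist on $\Sigma_g^b$ about a curve of the same topological type. I would take the lifts of the puncture-pair twists to be Dehn twists about $b-1$ simple closed curves $e_1,\dots,e_{b-1}$ chosen to be \emph{nonseparating}. This is possible because $g\geq 2$ provides enough handles to route each $e_i$ through a handle adjacent to the pair $\{\partial_i,\partial_{i+1}\}$, rather than taking the obvious separating circle bounding a pair-of-pants with these two boundary components; the resulting twist still projects, under $\mathrm{cap}$, to the puncture-pair generator of Theorem~\ref{thm:pmod_gen} modulo the kernel.

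The hard part is the second step: eliminating the boundary twists $t_{\partial_i}$. For this I would embed, for each $i$, a four-holed sphere $\Sigma_0^4\hookrightarrow\Sigma_g^b$ so that $\partial_i$ is one of its outer boundary components, while the remaining three outer boundaries and three inner curves can all be identified (possibly after conjugation by elements already generated) with curves whose twists lie in $\langle t_{a_j}, t_{b_j}, t_{c_j}, t_{e_j}\rangle$. The lantern relation
\[
t_x t_y t_z = t_\alpha t_\beta t_\gamma t_{\partial_i}
\]
then expresses $t_{\partial_i}$ as a product of twists already in the proposed generating set. The principal obstacle is arranging these $b$ lantern configurations coherently and simultaneously, in tandem with the nonseparating routing of the $e_i$, so that every boundary twist is eliminated without introducing curves outside the listed set $\{a_i,b_i,c_i,e_j\}$; once this is accomplished, the image of the proposed generating set surjects onto $\text{PMod}(\Sigma_{g,b})$ via $\mathrm{cap}$ and also contains the kernel, so by the five lemma it equals all of $\Mod(\Sigma_g^b)$.
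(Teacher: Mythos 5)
Your overall skeleton (cap the boundary components, lift generators of $\mathrm{PMod}(\Sigma_{g,b})$, and kill the boundary twists with the lantern relation) is a reasonable shape for the argument, and it matches the general strategy the survey itself sketches for boundary cases; the survey gives no proof of this theorem, which is quoted from \cite{korkmaz2023}. But there is a genuine gap in your first step. The sentence claiming that a nonseparating rerouting of $e_i$ ``still projects, under $\mathrm{cap}$, to the puncture-pair generator of Theorem~\ref{thm:pmod_gen} modulo the kernel'' is both incoherent and false: the kernel of $\mathrm{cap}$ lives in $\Mod(\Sigma_g^b)$, whereas the asserted equality would have to hold in the target $\mathrm{PMod}(\Sigma_{g,b})$, where there is nothing left to quotient by. Moreover $\mathrm{cap}(t_{e_i})=t_{\mathrm{cap}(e_i)}$, and if $e_i$ runs through a handle it is nonseparating, so its image in $\Sigma_{g,b}$ is a nonseparating curve; the generator of Theorem~\ref{thm:pmod_gen} is a twist about a \emph{separating} curve bounding a twice-punctured disk, and twists about curves of different topological types are not equal or even conjugate. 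Hence the image of your proposed set under $\mathrm{cap}$ is not the generating set of Theorem~\ref{thm:pmod_gen}, and the surjectivity half of your exact-sequence argument is unproven. The repair is to make the lantern relation do double duty: for $g\geq 2$ one can embed a four-holed sphere having the separating pair-enclosing curve (respectively $\partial_i$) as one boundary component while the remaining six curves are all nonseparating, so that both the puncture-pair twists and the boundary twists become products of nonseparating twists. This is precisely where the hypothesis $g\geq 2$ enters, and why, as the survey notes immediately after the theorem, nonseparating twists fail to generate for $g=1$, $b\geq 2$.

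Two further points need attention. First, your phrase ``possibly after conjugation by elements already generated'' hides a circularity: the identity $ft_af^{-1}=t_{f(a)}$ only places $t_{f(a)}$ in your subgroup $H$ when $f\in H$, which you do not yet know; you must make this effective, e.g.\ by first proving that the subgroup generated by the listed twists acts transitively on the relevant configurations of nonseparating curves (a change-of-coordinates argument in the style of Lickorish and Humphries), before the lantern curves can be identified with conjugates of your generators. Second, two small slips: the kernel of the capping map is free \emph{abelian}, isomorphic to $\mathbb{Z}^b$ and central, since the boundary twists $t_{\partial_1},\dots,t_{\partial_b}$ commute, so ``freely generated'' is incorrect; and the concluding step requires no five lemma, only the elementary observation that a subgroup $H\leq G$ containing $\ker\pi$ with $\pi(H)=\pi(G)$ must equal $G$.
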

The curves in this generating set are depicted in Figure~\ref{orientable_boundary_generators}.
\begin{figure}[h]
\begin{center}
\scalebox{0.25}{\includegraphics{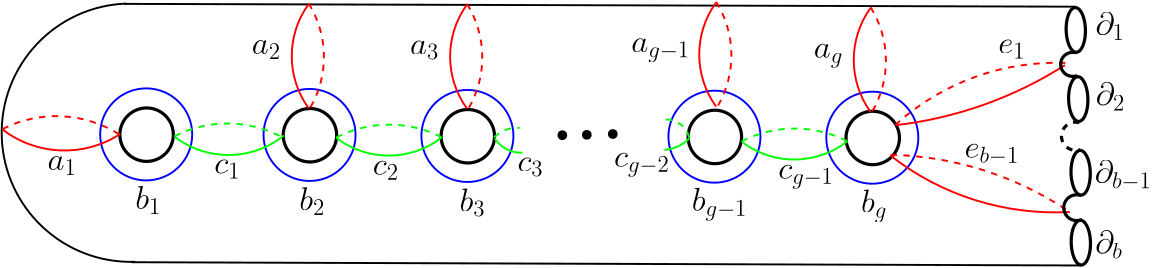}}
\caption{The curves on the surface  $\Sigma_g^b$.}
\label{orientable_boundary_generators}
\end{center}
\end{figure}
For $g=1$ and $b\geq 2$, the Dehn twists about the nonseparating curves  above cannot generate $\Mod(\Sigma_g^b)$.
The Dehn twists about the boundary parallel curves are needed to generate the whole group.
The case of surfaces with a single boundary component is closely related to the case of punctured surfaces.
\begin{theorem}~\cite{johnson}
The mapping class group $\Mod(\Sigma^1_{g})$ of an orientable surface of genus $g$ with one boundary component is generated by $2g+1$ Dehn twists.
\end{theorem}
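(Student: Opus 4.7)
The plan is to reduce the statement to Theorem~\ref{thm:johnson} via the capping-off short exact sequence and then recover the boundary Dehn twist through a chain relation.

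First I would consider the homomorphism $\Phi \colon \Mod(\Sigma_g^1) \to \Mod(\Sigma_{g,1})$ obtained by shrinking the boundary $\partial\Sigma_g^1$ to a marked point. Standard arguments give the capping short exact sequence
\[
1 \longrightarrow \langle t_\partial\rangle \longrightarrow \Mod(\Sigma_g^1) \stackrel{\Phi}{\longrightarrow} \Mod(\Sigma_{g,1}) \longrightarrow 1,
\]
where $t_\partial$ is the Dehn twist about a curve parallel to the boundary. Let $H \leq \Mod(\Sigma_g^1)$ be the subgroup generated by the $2g+1$ Humphries twists, realized on curves $a_1, a_2, b_1,\dots,b_g, c_1,\dots,c_{g-1}$ chosen disjoint from $\partial\Sigma_g^1$. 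By Theorem~\ref{thm:johnson} applied to the once-punctured surface, $\Phi(H) = \Mod(\Sigma_{g,1})$, so it suffices to show $t_\partial \in H$.

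To extract $t_\partial$ from the twists in $H$, I would invoke the classical chain relation. For a chain $\gamma_1,\ldots,\gamma_{2g+1}$ of simple closed curves in $\Sigma_g^1$ with $i(\gamma_i,\gamma_{i+1})=1$, $i(\gamma_i,\gamma_j)=0$ for $|i-j|\geq 2$, and regular neighborhood isotopic to $\Sigma_g^1$, one has
\[
(t_{\gamma_1} t_{\gamma_2}\cdots t_{\gamma_{2g+1}})^{2g+2} = t_\partial.
\]
A natural chain of length $2g+1$ that fills $\Sigma_g^1$ is $b_1, a_1, c_1, a_2, c_2, \ldots, c_{g-1}, a_g, b_g$, expressing $t_\partial$ as a word in the twists $t_{b_i}$, $t_{c_j}$, and $t_{a_k}$.

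The main obstacle is that Humphries' set contains only $t_{a_1}$ and $t_{a_2}$, whereas the chain above uses every $t_{a_k}$. This gap is bridged by the argument from Humphries' original proof for closed surfaces: the braid relations $t_{a_k} t_{c_k} t_{a_k} = t_{c_k} t_{a_k} t_{c_k}$ and $t_{a_{k+1}} t_{c_k} t_{a_{k+1}} = t_{c_k} t_{a_{k+1}} t_{c_k}$, combined with the commutation of disjoint twists, yield a conjugation formula relating $t_{a_{k+1}}$ to $t_{a_k}$ via elements already in $H$, so induction on $k$ places every $t_{a_k}$ in $H$. Applying the chain relation then gives $t_\partial \in H$, and together with $\Phi(H) = \Mod(\Sigma_{g,1})$ we conclude $H = \Mod(\Sigma_g^1)$, producing the desired $2g+1$ Dehn twist generating set.
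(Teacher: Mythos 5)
Your overall strategy --- cap the boundary to a puncture, use Theorem~\ref{thm:johnson} to see that the subgroup $H$ generated by the $2g+1$ Humphries twists surjects onto $\Mod(\Sigma_{g,1})$, and then capture the kernel $\langle t_\partial\rangle$ of the capping sequence via a chain relation --- is sound, and it is the standard route (the survey itself states this theorem with a citation to Johnson and gives no proof to compare against). However, the execution breaks at the decisive step. First, the chain relation is misstated: a chain of odd length $2g+1$ has a regular neighborhood with \emph{two} boundary components (a copy of $\Sigma_g^2$, not $\Sigma_g^1$), and the relation there reads $(t_{\gamma_1}\cdots t_{\gamma_{2g+1}})^{2g+2}=t_{d_1}t_{d_2}$; the relation that produces the twist about the single boundary component of $\Sigma_g^1$ is the \emph{even} chain relation $(t_{\gamma_1}\cdots t_{\gamma_{2g}})^{4g+2}=t_\partial$. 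Second, and more seriously, your bridge over the ``missing $t_{a_k}$'' gap fails: the braid relations among $t_{a_k}$, $t_{c_k}$, $t_{a_{k+1}}$ cannot place $t_{a_{k+1}}$ in $\langle t_{a_k},t_{c_k}\rangle$, because every element of that subgroup is supported, up to isotopy, on the one-holed torus neighborhood of $a_k\cup c_k$ and therefore carries $a_k$ to a curve contained in that subsurface, while $a_{k+1}$ lies outside it. This is not a fixable slip in bookkeeping: Humphries' own reduction from Lickorish's $3g-1$ twists to $2g+1$ twists requires the lantern relation, not braid relations, precisely because of this obstruction.

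The good news is that the problematic paragraph is unnecessary; it arises from mixing two labeling conventions. In the paper's configuration (Figure~\ref{humphries}), the curve $c_i$ meets $b_i$ and $b_{i+1}$ once each and is disjoint from every $a_j$, so your proposed sequence $b_1,a_1,c_1,a_2,\dots$ is not a chain at all. The chain that fills $\Sigma_g^1$ is $a_1,b_1,c_1,b_2,c_2,\dots,c_{g-1},b_g$: it has length $2g$, its regular neighborhood is all of $\Sigma_g^1$ with boundary parallel to $\partial\Sigma_g^1$, and it consists \emph{entirely} of Humphries curves (only $a_1$ among the $a_j$ is needed). The even chain relation then yields $t_\partial=(t_{a_1}t_{b_1}t_{c_1}\cdots t_{c_{g-1}}t_{b_g})^{4g+2}\in H$ in one stroke --- this is the same relation that makes Korkmaz's element $S$ of Theorem~\ref{thm:korkmaz} have order $4g+2$ on the closed surface. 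With $t_\partial\in H$ and $\Phi(H)=\Mod(\Sigma_{g,1})$, your concluding deduction $H=\Mod(\Sigma_g^1)$ is correct, and the proof closes.
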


While this result provides a finite generating set, the question of the \emph{minimal} number of generators (not necessarily Dehn twists) is a different one.
Similar to the case of punctured surfaces, it has been shown that $\Mod(\Sigma^b_{g})$ can also be generated by a smaller number of elements.
In~\cite{Wajnryb99},
Wajnryb provided an explicit finite presentation for the mapping class group $\Mod(\Sigma_{g,b})$ for any $g$ and $b$.
The minimal number of generators for $\Mod(\Sigma^b_{g})$ is known to grow with the genus and the number of boundary components.
While a single, simple formula for the minimal number of generators for all $g$ and $b$ is not as straightforward as for punctured surfaces, the following principle holds:

For the general case of $b \ge 1$ boundary components, the situation is more complex.
The mapping class group $\Mod(\Sigma^b_{g})$ can be understood through its relation to the mapping class groups of surfaces with fewer boundary components or punctures via fiber sequences.
A key result, again by Wajnryb~\cite{Wajnryb99}, provides an explicit finite presentation for the mapping class group $\Mod(\Sigma^b_{g})$ for any $g$ and $b$.
For a fixed genus $g$, the complexity of the mapping class group, and hence the number of generators in a minimal generating set, generally increases with the number of boundary components.
For many cases, especially when $g$ is large enough, it is known that two generators suffice.
However, for low genus and a high number of boundary components, the situation can be different.
The precise minimal number of generators for arbitrary $g$ and $b$ is a subject of ongoing research.
For many practical purposes, the presentations given by Wajnryb and others provide the most concrete understanding of these groups.
The general idea behind finding generating sets for surfaces with boundaries often involves relating the group to the mapping class group of the closed surface obtained by capping the boundary components with disks (or punctured disks).
The generators are then typically a combination of generators for the closed surface and Dehn twists around the boundary curves.
\subsection{Relationship with Mapping Class Groups of Closed Surfaces}
There are important relationships between mapping class groups of surfaces with boundary and those of closed surfaces, typically via capping operations.
\begin{enumerate}
    \item \text{Capping with a Disk}: If we glue a disk $D$ to a boundary component $\partial_i S$ of $\Sigma^b_g$ to obtain $\Sigma_g^{b-1}$ (or $\Sigma_g$ if $b=1$), there is a surjective homomorphism:
    $$ \text{cap}_i : \Mod(\Sigma^b_g) \to \Mod(\Sigma_g^{b-1}) $$
    The kernel of this map is generated by the Dehn twist $t_{c_i}$ about a curve $c_i$ parallel to the boundary component $\partial_i S$ that was capped.
This leads to a short exact sequence (for $b \ge 1$ and assuming $\Sigma_g^{b-1}$ is not $\Sigma^0_0$ or $\Sigma^1_0$ which have trivial $\Mod$):
    $$ 1 \to \langle t_{c_i} \rangle \cong \mathbb{Z} \to \Mod\Sigma({g, b}) \to \Mod(\Sigma_{g, b-1}) \to 1. $$
    This sequence can be used inductively.
Repeatedly capping all $b$ boundary components relates $\Mod(\Sigma_g^b)$ to $\Mod(\Sigma_g)$.
The kernel in this case would be more complex, involving multiple boundary twists.
    \item \text{Forgetting Punctures (if $n>0$):}  The Birman exact sequence relates $\Mod(\Sigma_{g, n}^b)$ to $\Mod(\Sigma_{g, n-1}^b)$:
    $$ 1 \to \pi_1(\Sigma^{b}_{g, n-1}, p_n) \xrightarrow{\text{push}} \Mod(\Sigma_{g, n}^{b}) \to \Mod(\Sigma_{g, n-1}^{b}) \to 1. $$
    The generators for the kernel are point-pushing maps, expressible as Dehn twists.
\end{enumerate}

\section{Concluding Remarks}

The century-long quest to understand the algebraic structure of the mapping class group of an orientable surface, $\Mod(\Sigma_g)$, has been a story of remarkable progress, moving from foundational geometric constructions to profound algebraic economy.
The journey began with Dehn's seminal work proving that $\Mod(\Sigma_g)$ is finitely generated, albeit by a large collection of Dehn twists \cite{dehn}.
This was followed by a classical period of refinement, led by Lickorish and Humphries, which culminated in establishing that a minimal set of $2g+1$ Dehn twists is sufficient for a surface of genus $g$ \cite{lickorish,humphries}.
The landscape of this problem shifted dramatically with Wajnryb's landmark result that this intricate group can be generated by just two elements, revealing an unexpected algebraic compactness \cite{wajnryb}.
This discovery opened a new chapter focused on ultimate minimality, a result later extended by Monden to surfaces with any number of punctures \cite{Monden24}.
Subsequent research sought to uncover the geometric nature of these minimal generating sets.
Korkmaz demonstrated that the group could be generated by a geometrically intuitive pair: a single Dehn twist and a finite-order torsion element \cite{korkmaz2005}.
The pursuit of generators with the simplest possible structure culminated in the study of involutions, leading to Korkmaz's proof that three involutions are sufficient to generate the group for high-genus surfaces \cite{korkmaz2020}.
These developments do more than just simplify the group's algebraic description.
They provide explicit, geometrically meaningful generators that are foundational to modern applications, from advancing our understanding of Teichmüller theory and its quotient, the moduli space, to enabling more efficient constructions and computations in $3$-manifold topology.
\subsection{Future Directions}
The progress in understanding minimal generation opens several promising directions for future research:

\begin{itemize}
    \item \text{Minimality in low-genus and boundary cases:} While bounds for involution generators in high-genus cases are now settled \cite{korkmaz2020}, the exact minimal number of generators for surfaces of small genus or those with multiple boundary components remains an active area of investigation.
    \item \text{Relations and presentations:} Beyond finding generators, the search for uniform, minimal presentations, which also include the relations between generators, is a critical next step.
Such presentations could significantly streamline algorithms in computational topology.

    \item \text{Dynamics and growth:} Understanding how these minimal generating sets act on key geometric spaces, such as the curve complex and Teichmüller space, may shed new light on asymptotic group invariants, growth functions, and the large-scale geometry of $\Mod(\Sigma_g)$.
    \item \text{Extensions to other contexts:} Generalizing these findings to non-orientable surfaces and decorated surfaces, for instance by considering Pin structures or cohomology with twisted coefficients, promises to uncover new torsion and involution phenomena and deepen our grasp of mapping class groups in their broadest context.
\end{itemize}

These lines of inquiry will continue to illuminate the rich interplay between mapping class groups and the wider landscapes of topology, geometry, and mathematical physics, carrying forward a century of foundational work.

\textbf{Acknowledgements}
The authors thank Athanase Papadopoulos for inviting them to write this survey and Allen Hatcher for correcting a mistake in an earlier version of this work.

\printindex

\end{document}